\tikzstyle{block}=[draw opacity=0.7,line width=1.4cm]
\tikzset{
  big black arrow/.style={
    decoration={markings,mark=at position 1 with {\arrow[scale=2.5,black]{>}}},
    postaction={decorate},
    shorten >=0.4pt},
    line/.style={draw, ->}}
\newdimen\plusheight
\def\+{\;\lower\plusheight\hbox{$+$}\;}
\newdimen\minusheight
\def\-{\;\lower\minusheight\hbox{$-$}\;}
\newdimen\cdotsheight
\def\cds{\lower\cdotsheight\hbox{$\cdots$}}
\renewcommand{\(}{\left\(}
\renewcommand{\)}{\right\)}
\renewcommand{\pmod}[1]{\,(\textup{mod}\,#1)}
\numberwithin{equation}{section}
\theoremstyle{plain}
\newtheorem{theorem}{Theorem}[section]
\newtheorem{lemma}[theorem]{Lemma}
\begin{document}
\begin{center}{\bf Some New Congruences and Partition-Theoretic Interpretations for the Coefficients of Some Rogers-Ramanujan Type Identities
	}\end{center}
	\begin{center}	
	\footnotesize{\bf Sabi Biswas and Nipen Saikia$^{\ast}$}\\
					Department of Mathematics, Rajiv Gandhi
			University,\\ Rono Hills, Doimukh-791112, Arunachal Pradesh, India.\\
			E. Mail(s): sabi.biswas@rgu.ac.in; nipennak@yahoo.com\\
			$^\ast$\textit{Corresponding author}.\end{center}\vskip2mm
		
		\noindent {\bf Abstract:} Ramanujan listed several $q$-series identities in his lost notebook. The most well known $q$-series identities are the Rogers-Ramanujan type identities which are first discovered by Rogers and then rediscovered by Ramanujan. In this paper, we give partition-theoretic interpretations of some of the Rogers-Ramanujan type identities using overpartition and colour partition  of positive integers, and  prove infinite families of congruences modulo powers of $2$.
		\vskip 3mm
			\noindent  {\bf Keywords and phrases:} Rogers-Ramanujan type identities;  overpartition; colour partition; partition congruences. 
				\vskip 3mm
				\noindent  {\bf 2020 Mathematical Subject Classification:} 11P84; 11P83.
		
		\section{Introduction}
		For any complex numbers $A$ and $q$ with $|q|<1$ , a $q$-series is a summand containing the expression of the type
		
		$$(A;q)_{\infty} = \prod_{k=0}^{\infty}(1-Aq^k),\quad where\quad  (A;q)_{0} = 1,\quad (A;q)_{n} =\prod_{k=0}^{n-1}(1-Aq^k), \quad  n\ge1.$$
		
		For convenience, one often use the notation
		$$(A_{1};q)_{\infty}(A_{2};q)_{\infty}(A_{3};q)_{\infty}....(A_{k};q)_{\infty} = (A_{1},A_{2},A_{3},...,A_{k};q)_{\infty}.$$
		Throughout the paper, we write $\ell _{n} := (q^n;q^n)_{\infty},$ for any integer $n\geq1$. Ramanujan defined general theta-function $f(c,d)$ \cite[p. 34, (18.1)]{BBC} as
		\begin{equation}\label{eq3}
		f(c,d) = \sum_{m={-}\infty}^{\infty}c^{m(m+1)/2}d^{m(m-1)/2},  \quad                |cd|<1.
		\end{equation}The special cases \cite[p. 35, Entry 18]{BBC} of $f(c,d)$ are given by 
		\begin{equation}\label{eq4}
		\phi(q) := f(q,q) = \sum_{m={-}\infty}^{\infty}q^{{m}^2}=\frac{(-q;q^2)_\infty (q^2;q^2)_\infty}{(q;q^2)_\infty (-q^2;q^2)_\infty} = \frac{\ell_{2}^{5} }{{\ell_{1}^2}{\ell_{4}^2}} 
		\end{equation} and
		\begin{equation}\label{eq5}
		\psi(q) := f(q,q^3) = \sum_{m=0}^{\infty}q^{m(m+1)/2} = \frac{\ell_{2}^2}{\ell_{1}}.
		\end{equation}The product representations in the special cases \eqref{eq4}-\eqref{eq5} are the consequences of one of the celebrated result in the theory of $q$-series known as the Jacobi's triple product identity, given by
		\begin{equation}\label{jaco}f(c,d)= (-c;cd)_{\infty}(-d;cd)_{\infty}(cd;cd)_{\infty}.\end{equation}
	
		By using elementary $q$-operations, it is easily seen that
		\begin{equation}\label{eq6}
		\phi(-q)=\frac{(q;q)_\infty}{(-q;q)_\infty} = \frac{(q;q)_{\infty}^2}{(q ^2;q^2)_{\infty}} = \frac{\ell_{1}^2}{\ell_{2}}.
		\end{equation}
		
		Ramanujan~\cite{lostp2} listed several $q$-series identities in his lost notebook. The most well-known $q$-series identities are the Rogers-Ramanujan identities (RRI) given by
		\begin{equation}\label{eq1}
		\mathbb{S}_1(q):= \prod_{n=0}^{\infty}(1-q^{5n+1})^{-1}(1-q^{5n+4})^{-1}
		\end{equation} and \begin{equation}\label{eq2}
		\mathbb{S}_2(q):=\prod_{n=0}^{\infty}(1-q^{5n+2})^{-1}(1-q^{5n+3})^{-1}.
		\end{equation}
		The identities \eqref{eq1} and \eqref{eq2}  were first discovered by Rogers~\cite{RL} in $1893$ and then rediscovered by Ramanujan in $1913$. Partition-theoretic interpretations of \eqref{eq1} and \eqref{eq2} are given by MacMahon~\cite{MM}.
		
		 Recently, Afsharijoo~\cite{AP} established a recurrence relation which gives extended form of these identities where odd and even parts play different roles. Several Rogers-Ramanujan type identities (RRTIs) were also provided by Slater~\cite{SL} and Chu and Zhang~\cite{CW}. Gupta and Rana~\cite{RG1}  and Gupta et al. \cite{GRS} offered combinatorial interpretations of many RRTIs by using signed partitions which inspired them to explore more about signed partition and congruence properties of these identities. Gupta and Rana~\cite{GM1} collected severnteen RRTIs from~\cite{CW} and established some particular congruences modulo powers of $2, 3$ and $6$. 
		In this paper, we investigate following  RRTIs from \cite{CW} (also see \cite{GM1}) for their partition-theoretic interpretations and new congruence properties:  
		\begin{align}
		\label{gk}
		& G_k(q)=\sum_{n=0}^{\infty}g_{k}(n)q^n =\dfrac{(-q;q)_\infty}{(q;q)_\infty}\phi(q^k),\qquad k=2.\\
		\label{c1}
		& H(q)=\sum_{n=0}^{\infty}h(n)q^n=\sum_{n=0}^{\infty}\dfrac{(-q;q)_{2n}q^n}{(q;q)_{2n+1}}=\dfrac{(-q;q)_\infty}{(q;q)_\infty}(q^4,-q^4,-q^4;q^4)_\infty.
		\end{align}
		\begin{align}	
		\label{c7}
		& T(q)=\sum_{n=0}^{\infty}t(n)q^n=\sum_{n=0}^{\infty}\dfrac{(-q;q^2)_nq^n}{(q;q)_{2n+1}}=\dfrac{(-q;q)_\infty}{(q;q)_\infty}(q^{12},q^3,q^9;q^{12})_\infty.\\
		\label{c10}
		& M(q)=\sum_{n=0}^{\infty}m(n)q^n=\sum_{n=0}^{\infty}\dfrac{(-q;q)_{2n}q^n}{(q^2;q^2)_n}=\dfrac{(-q;q)_\infty}{(q;q)_\infty}(q^6,q,q^5;q^6)_\infty.\\
		\label{c12}
		& R(q)=\sum_{n=0}^{\infty}r(n)q^n=\sum_{n=0}^{\infty}\dfrac{(-q^2;q^2)_nq^{n(n+1)}}{(q;q)_{2n+1}}=\dfrac{(-q^2;q^2)_\infty}{(q^2;q^2)_\infty}(q^6,-q,-q^5;q^6)_\infty.\\
		\label{c14}
		& S(q)=\sum_{n=0}^{\infty}s(n)q^n=\sum_{n=0}^{\infty}\dfrac{(-1;q^2)_nq^{n(n+1)}}{(q;q)_{2n}}=\dfrac{(-q^2;q^2)_\infty}{(q^2;q^2)_\infty}(q^6,-q^3,-q^3;q^6)_\infty.
		\end{align}

		In Section 3, we offer partition-theoretic interpretation of the $q$-series identities \eqref{gk} and prove their congruence properties. In Section 4, we give partition-theoretic interpretations of \eqref{c1}. Some infinite families of congruence for the identities \eqref{c1}-\eqref{c14} modulo power of 2 are also proved. 
		
		To end the introduction, we define partition functions and their generating functions which are important in this paper.
		A partition of an positive integer $n$ can be defined as finite sequence of positive integers $(\beta_1, \beta_2, ..., \beta_k)$ such that $\sum_{j=1}^k\beta_j=n;\quad \beta_j\ge \beta_{j+1},$ where $\beta_j$ are called parts or summands of the partition. The number of partitions of $n$ is usually denoted by $p(n)$. As an illustration, $n=3$ has following three partitions: $3,\quad  2+1, \quad1+1+1.$
		
		For positive integer $n$, an overpartition of  $n$ is defined as the partition of $n$ in which the first occurence of each part may be overlined. If $O(n)$ denotes the number of overpartitions of $n$ then $$\sum_{n=0}^{\infty}O(n)q^n=\dfrac{(-q;q)_\infty}{(q;q)_\infty}.$$
		Ramanujan~\cite{BR} defined the general partition function $p_{t}(n)$ as
		$$\sum_{n = 0}^{\infty}p_{t}(n)q^n = \frac{1}{(q;q)_{\infty}^t}.$$
		For $t>0$,  $p_{t}(n)$ denoted the number of partition of $n$ where each part of the partition is assumed to have $t$ distinct colours.  Also, for positive integers $r$,$s$ and $t$, $$\dfrac{1}{(q^r;q^s)^t}$$ denotes the generating function of the number of partitions of a positive integer such that parts $\equiv r\pmod{s}$ has $t$ colours. 
		\section{Preliminaries}
		The following  lemmas will be used to prove our results. 
		
		\begin{lemma} (\cite[Theorem 2.1]{CG}). If p is an odd prime, then
		\begin{equation}\label{eq7}
		\psi(q) = \sum_{t=0}^{(p-3)/2} q^{(t^{2}+t)/2} f\left(q^{\left(p^{2}+(2t+1)p\right)/2}, q^{\left(p^{2}-(2t+1)p\right)/2}\right) + q^{(p^{2}-1)/8}\psi(q^{p^2}).
		\end{equation}
		Furthermore, $\frac{(t^{2}+t)}{2} \not\equiv \frac{(p^{2}-1)}{8} \pmod{p}\quad for\quad 0\leq t \leq (p-3)/2.$
		\end{lemma}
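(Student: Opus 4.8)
The plan is to prove \eqref{eq7} by dissecting the defining series of $\psi$ according to residue classes modulo $p$, working entirely at the level of the series definition $\psi(q)=\sum_{n=0}^{\infty}q^{n(n+1)/2}$ from \eqref{eq5} and of $f$ from \eqref{eq3}, without touching the product side. The one preliminary observation I would record is that, since $n(n+1)/2=(-n-1)(-n)/2$, the map $n\mapsto n(n+1)/2$ is exactly two-to-one from $\mathbb{Z}$ onto the triangular numbers, so that
\begin{equation*}
\sum_{n=-\infty}^{\infty}q^{n(n+1)/2}=2\sum_{n=0}^{\infty}q^{n(n+1)/2}=2\psi(q).
\end{equation*}
This lets me run the dissection over all of $\mathbb{Z}$, where it is symmetric and clean, and recover $\psi(q)$ at the end by dividing by $2$.

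First I would split $\mathbb{Z}=\bigsqcup_{t=0}^{p-1}\{\,n:n\equiv t\pmod p\,\}$ and substitute $n=pm+t$ with $m\in\mathbb{Z}$. A direct expansion gives
\begin{equation*}
\frac{n(n+1)}{2}=\frac{t^2+t}{2}+\frac{p^2m^2+(2t+1)pm}{2},
\end{equation*}
so the $q$-exponents produced by the class $n\equiv t$ are exactly those appearing in $q^{(t^2+t)/2}f\!\left(q^{(p^2+(2t+1)p)/2},q^{(p^2-(2t+1)p)/2}\right)$; indeed, reading $f(c,d)=\sum_m c^{m(m+1)/2}d^{m(m-1)/2}$ with $c=q^{(p^2+(2t+1)p)/2}$ and $d=q^{(p^2-(2t+1)p)/2}$ (note $cd=q^{p^2}$) reproduces the bracketed exponent term by term. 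Thus each residue class contributes one such theta term.

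The remaining work is bookkeeping. Because $n\equiv t$ forces $-n-1\equiv p-1-t$, the classes $t$ and $p-1-t$ generate the same $q$-series (the involution $n\mapsto -n-1$ maps one class bijectively onto the other while preserving $n(n+1)/2$); I would pair them and write the common contribution in the form with $0\le t\le (p-3)/2$, for which both theta-arguments are genuine positive powers of $q$. The self-paired middle class $t=(p-1)/2$ has $2t+1=p$, hence arguments $q^{p^2}$ and $q^{0}=1$ and leading exponent $(p^2-1)/8$; using $f(c,1)=\sum_m c^{m(m+1)/2}=2\psi(c)$ (again two-to-one), this class contributes $2q^{(p^2-1)/8}\psi(q^{p^2})$. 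Collecting, $2\psi(q)$ equals twice the asserted right-hand side, and dividing by $2$ yields \eqref{eq7}. For the ``furthermore'' claim I would note $8\cdot\tfrac{t^2+t}{2}=(2t+1)^2-1$ while $8\cdot\tfrac{p^2-1}{8}=p^2-1\equiv -1\pmod p$, so equality of residues modulo $p$ would force $(2t+1)^2\equiv 0\pmod p$, i.e. $2t+1\equiv 0\pmod p$, impossible for $0\le t\le (p-3)/2$ since then $1\le 2t+1\le p-2$; as $\gcd(8,p)=1$, this gives the non-congruence.

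The main obstacle is not any single computation but the correct accounting of the two-to-one nature of $n\mapsto n(n+1)/2$: one must track that extending the sum to $\mathbb{Z}$ inserts a global factor of $2$, that the pairing $t\leftrightarrow p-1-t$ is exact with exactly one self-paired middle class, and that the middle theta function $f(c,1)$ carries its own factor of $2$, so that all factors of $2$ cancel consistently to leave the clean identity \eqref{eq7}.
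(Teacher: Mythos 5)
Your proof is correct: the extension of $\sum_{n\ge 0}q^{n(n+1)/2}$ to a bilateral sum via the involution $n\mapsto -n-1$, the substitution $n=pm+t$, the pairing $t\leftrightarrow p-1-t$ with the single self-paired class $t=(p-1)/2$ contributing $2q^{(p^2-1)/8}\psi(q^{p^2})$, and the reduction of the non-congruence claim to $(2t+1)^2\not\equiv 0\pmod p$ all check out, and the factors of $2$ cancel as you say. The paper itself gives no proof of this lemma (it is quoted from Cui and Gu, Theorem 2.1), and your dissection argument is essentially the standard one given in that source.
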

		
		\begin{lemma}(\cite[Theorem 2.2]{CG}). If $p\geq5$ is a prime, then
		\begin{multline}\label{eq8}
		\ell_1 = \sum_{\substack{t={-(p-1)/2} \\ t \ne {(\pm p-1)/6}}}^{(p-1)/2} (-1)^{t} q^{(3t^{2}+t)/2} f\left(-q^{3p^{2}+(6t+1)p/2}, -q^{3p^{2}-(6t+1)p/2}\right)\\+ (-1)^{(\pm p-1)/6} q^{(p^{2}-1)/24} \ell_{p^2},
		\end{multline} where
		\begin{equation*}
		\dfrac{\pm p-1}{6}
		= \left\{
		\begin{array}{cc}
		\dfrac{(p-1)}{6} \quad
		 if~ p \equiv 1\pmod{6}, \\
		\dfrac{(-p-1)}{6} \quad
		 if~ p \equiv -1\pmod{6}.
		\end{array}
		\right.
		\end{equation*} Furthermore, if $-\frac{p-1}{2} \leq t \leq \frac{p-1}{2} \quad and \quad t \ne \frac{\pm p-1}{6}, \quad then  \quad \frac{3t^{2}+t}{2} \not\equiv \frac{p^{2}-1}{24} \pmod{p}.$
		
		\end{lemma}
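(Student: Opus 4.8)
The plan is to obtain the stated $p$-dissection of $\ell_1$ directly from Euler's pentagonal number theorem by sorting the summation index into residue classes modulo $p$. First I would record that, applying the Jacobi triple product \eqref{jaco} with $c=-q$ and $d=-q^2$ inside the definition \eqref{eq3},
\begin{equation*}
\ell_1 = (q;q)_\infty = f(-q,-q^2) = \sum_{m=-\infty}^{\infty}(-1)^m q^{(3m^2+m)/2},
\end{equation*}
the exponents $(3m^2+m)/2$ being the generalized pentagonal numbers. This is the series I will dissect.

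Next I would write each integer $m$ uniquely as $m=pn+t$ with $n\in\mathbf{Z}$ and $t$ running over the symmetric residue system $-(p-1)/2\le t\le (p-1)/2$. Substituting and using that $p$ is odd (so $(-1)^{pn}=(-1)^n$) gives
\begin{equation*}
\frac{3m^2+m}{2}=\frac{3t^2+t}{2}+\frac{pn\left(3pn+(6t+1)\right)}{2},\qquad (-1)^m=(-1)^t(-1)^n.
\end{equation*}
Factoring $(-1)^t q^{(3t^2+t)/2}$ out of the $t$-th class leaves the inner sum $\sum_{n}(-1)^n q^{pn(3pn+(6t+1))/2}$, whose exponent has $n^2$-coefficient $3p^2$ and $n$-coefficient $(6t+1)p$. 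Matching this against the bilateral form \eqref{eq3}, $f(a,b)=\sum_n a^{n(n+1)/2}b^{n(n-1)/2}$, with $a=-q^{\alpha}$ and $b=-q^{\beta}$ forces $\alpha+\beta=3p^2$ and $\alpha-\beta=(6t+1)p$, so the inner sum is $f\left(-q^{(3p^2+(6t+1)p)/2},-q^{(3p^2-(6t+1)p)/2}\right)$. Since $m\mapsto(n,t)$ is a bijection the index $n$ still runs over all of $\mathbf{Z}$, so this reproduces exactly the main summation in \eqref{eq8}.

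It then remains to isolate the degenerate class. Among the chosen $t$, one of the exponents $\alpha,\beta$ equals $p^2$ precisely when $6t+1=\pm p$, i.e. when $t=(\pm p-1)/6$, and this integer lies in the residue range exactly according as $p\equiv\pm1\pmod 6$ (automatic since $p\ge5$ is prime). For that value $\{\alpha,\beta\}=\{p^2,2p^2\}$, so by symmetry of $f$ and \eqref{jaco} the inner theta function collapses to $f(-q^{p^2},-q^{2p^2})=\ell_{p^2}$, while a short computation gives $(3t^2+t)/2=(p^2-1)/24$; pulling this class out yields the separated term $(-1)^{(\pm p-1)/6}q^{(p^2-1)/24}\ell_{p^2}$ and hence \eqref{eq8}.

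Finally, for the ``furthermore'' assertion I would reduce the congruence $\tfrac{3t^2+t}{2}\equiv\tfrac{p^2-1}{24}\pmod p$ to a perfect square: multiplying by $24$ and using $p^2\equiv0$ gives $(6t+1)^2\equiv0\pmod p$, whence $6t+1\equiv0\pmod p$, which in the symmetric range has the unique solution $t=(\pm p-1)/6$. Thus the exponent attains the residue $(p^2-1)/24\pmod p$ only at the excluded index, proving the incongruence for all other $t$. The only genuine hurdle is keeping the theta-function arguments and signs consistent through the substitution $m=pn+t$; the degenerate-case identification and the congruence step are then immediate consequences of completing the square.
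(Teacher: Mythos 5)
Your proof is correct. Note, however, that the paper itself offers no proof of this lemma: it is quoted verbatim as Theorem 2.2 of Cui and Gu \cite{CG}, so there is nothing internal to compare against. Your argument --- writing $\ell_1=f(-q,-q^2)=\sum_m(-1)^mq^{(3m^2+m)/2}$ via Euler's pentagonal number theorem, splitting $m=pn+t$ over the symmetric residue system, matching each inner bilateral sum against $f(a,b)$ to get the arguments $-q^{(3p^2\pm(6t+1)p)/2}$, and collapsing the class $6t+1=\pm p$ to $(-1)^{(\pm p-1)/6}q^{(p^2-1)/24}\ell_{p^2}$ --- is precisely the standard dissection proof, and it is essentially how Cui and Gu establish the result in the cited source. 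The ``furthermore'' step via $(6t+1)^2\equiv0\pmod p$ is also right; the only point you pass over quickly is that $6t+1\equiv0\pmod p$ in the range $-(p-1)/2\le t\le(p-1)/2$ could a priori come from $6t+1=\pm 2p$ as well as $\pm p$, but since the symmetric range contains exactly one representative of each residue class mod $p$ (or, directly, because $\pm 2p-1\not\equiv0\pmod 6$), the solution is indeed unique and equals the excluded index. No gaps of substance.
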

		
		\begin{lemma} We have,
		\begin{align}
		\label{eq12}
		&\dfrac{\ell_3^3}{\ell_1} = \dfrac{\ell_{4}^3 \ell_6^2}{\ell_2^2 \ell_{12}} + q\dfrac{\ell_{12}^3}{\ell_4},\\
		\label{eq14}
		&\dfrac{\ell_2^2}{\ell_1} = \dfrac{\ell_6 \ell_9^2}{\ell_3 \ell_{18}} + q\dfrac{\ell_{18}^2}{\ell_9},\\
		\label{c18} 
		&\dfrac{\ell_2}{\ell_1^2}=\dfrac{\ell_6^4 \ell_9^6}{\ell_3^8 \ell_{18}^3}+2q\dfrac{\ell_6^3 \ell_9^3}{\ell_3^7}+4q^2\dfrac{\ell_6^2 \ell_{18}^3}{\ell_3^6}.
		\end{align}
		\end{lemma}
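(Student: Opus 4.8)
All three identities are dissection formulas for eta-quotients: \eqref{eq14} and \eqref{c18} sort the exponents of a $q$-series into residue classes modulo $3$, while \eqref{eq12} sorts them modulo $2$. My plan is to reduce each to a statement about theta functions and then reconvert using only tools already at hand: the evaluations $\psi(q)=\ell_2^2/\ell_1$ from \eqref{eq5} and $\varphi(-q)=\ell_1^2/\ell_2$ from \eqref{eq6}, the Jacobi triple product \eqref{jaco}, and the elementary consequence $(-q^n;q^n)_\infty=\ell_{2n}/\ell_n$ of \eqref{eq6}.

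Identity \eqref{eq14} is the cleanest, being exactly the $p=3$ instance of \eqref{eq7}. There the upper index $(p-3)/2$ equals $0$, so the sum collapses to its single $t=0$ term $f(q^{6},q^{3})$, while the tail contributes $q^{(9-1)/8}\psi(q^{9})=q\,\psi(q^{9})$; hence $\psi(q)=f(q^{3},q^{6})+q\,\psi(q^{9})$, using $f(q^6,q^3)=f(q^3,q^6)$. I would then read off $\psi(q)=\ell_2^2/\ell_1$ and $\psi(q^{9})=\ell_{18}^2/\ell_9$ from \eqref{eq5}, and evaluate $f(q^{3},q^{6})$ by \eqref{jaco} as $(-q^{3};q^{9})_\infty(-q^{6};q^{9})_\infty\,\ell_9$. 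Collapsing the first two factors via $(-q^{3};q^{9})_\infty(-q^{6};q^{9})_\infty=(-q^{3};q^{3})_\infty/(-q^{9};q^{9})_\infty=(\ell_6/\ell_3)(\ell_9/\ell_{18})$ gives $f(q^{3},q^{6})=\ell_6\ell_9^2/(\ell_3\ell_{18})$, which is precisely \eqref{eq14}.

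For \eqref{c18}, note first that $\ell_2/\ell_1^2=1/\varphi(-q)$ by \eqref{eq6}. Since $\varphi(-q)=\sum_n(-1)^nq^{n^2}$ and $n^2\equiv 0,1\pmod 3$, grouping the terms with $n\equiv 0$ and $n\equiv\pm1\pmod 3$ and applying \eqref{jaco} gives a two-term dissection $\varphi(-q)=P+qQ$ with $P=P(q^3)$ and $Q=Q(q^3)$ (the first piece being $\varphi(-q^9)=\ell_9^2/\ell_{18}$). The key step is then to rationalize using $P^3+q^3Q^3=(P+qQ)(P^2-qPQ+q^2Q^2)$, so that
\begin{equation*}
\frac{1}{\varphi(-q)}=\frac{P^2}{P^3+q^3Q^3}-q\,\frac{PQ}{P^3+q^3Q^3}+q^2\,\frac{Q^2}{P^3+q^3Q^3},
\end{equation*}
whose three summands are manifestly the residue classes $0,1,2\pmod 3$ because $P$, $Q$ and the denominator are all series in $q^3$. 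Converting $P$, $Q$ and $P^3+q^3Q^3$ to eta-quotients via \eqref{jaco} should match the three terms of \eqref{c18} term by term; in particular the sign on $-q\,PQ$ together with $Q<0$ accounts for the $+2q$ term, and $q^2Q^2$ for the $+4q^2$ term, explaining the coefficients $1,2,4$.

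The remaining identity \eqref{eq12} is a $2$-dissection of the single eta-quotient $\ell_3^3/\ell_1$. The same rationalization applies to $1/\ell_1$: writing Euler's product as $\ell_1=(q;q)_\infty=U+qV$ with $U=U(q^2)$, $V=V(q^2)$ (its exponents $n(3n-1)/2$ realize both parities) yields $1/\ell_1=(U-qV)/(U^2-q^2V^2)$ with denominator a series in $q^2$. The extra wrinkle is that the numerator must also be split by parity; Jacobi's identity $\ell_3^3=\sum_{n\ge0}(-1)^n(2n+1)q^{3n(n+1)/2}$ gives $\ell_3^3=A(q^6)+q^3B(q^6)$, and multiplying the two dissected factors while collecting even and odd powers of $q$ produces the two terms of \eqref{eq12}. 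I expect this last bookkeeping to be the main obstacle: the explicit $2$-dissections of $\ell_1$ and of $\ell_3^3$ must be realized as eta-quotients and the resulting products simplified, which is where sign and modulus errors are easiest to commit. As a uniform and fully rigorous cross-check for all three identities, one may instead observe that, after clearing denominators, each becomes an equality of holomorphic eta-products of the same weight on a common $\Gamma_0(N)$, so it suffices to compare $q$-expansions up to the relevant Sturm bound.
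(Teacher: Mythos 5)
The paper does not actually prove this lemma: it simply cites Hirschhorn's \emph{The Power of $q$} for all three identities, so any genuine derivation is by definition a different route, and yours is a reasonable one; how much of it is actually complete varies by identity. Your treatment of \eqref{eq14} is fully correct and self-contained: specializing \eqref{eq7} to $p=3$ gives $\psi(q)=f(q^3,q^6)+q\,\psi(q^9)$, and your triple-product evaluation $f(q^3,q^6)=\ell_6\ell_9^2/(\ell_3\ell_{18})$ is right, so this part needs no further work. For \eqref{c18} your rationalization is exactly the standard mechanism, and it does explain the coefficients $1,2,4$, since $\varphi(-q)=\varphi(-q^9)-2q\,f(-q^3,-q^{15})$ with both $P=\varphi(-q^9)=\ell_9^2/\ell_{18}$ and $f(-q^3,-q^{15})=\ell_3\ell_{18}^2/(\ell_6\ell_9)$ honest eta-quotients; but the phrase ``should match'' hides the one genuinely nontrivial step, namely proving that the denominator $P^3+q^3Q^3=\varphi(-q^9)^3-8q^3f(-q^3,-q^{15})^3$ collapses to the single eta-quotient $\ell_3^8\ell_{18}/(\ell_6^4\ell_9^2)=\varphi(-q^3)^4/\varphi(-q^9)$. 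That is a cubic theta identity in its own right, not a formal consequence of \eqref{jaco}, and without it the three displayed terms of \eqref{c18} are not obtained. For \eqref{eq12} the situation is worse: the parity pieces of the pentagonal-number series ($f(q^5,q^7)$ and $f(q,q^{11})$) and of Jacobi's series for $\ell_3^3$ are generalized theta functions rather than eta-quotients, so the ``bookkeeping'' you defer is where essentially all of the work lives, and the sketch as written does not yet constitute a proof. Your closing observation is what rescues the whole proposal: after clearing denominators each identity is an equality of holomorphic eta-products on a common $\Gamma_0(N)$, so a finite coefficient check up to the Sturm bound is a legitimate and uniform complete proof of all three --- arguably a more self-contained justification than the paper's bare citation, provided you actually record the level, weight and bound and perform the verification.
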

 Identity \eqref{eq12} is Equation (22.1.14) in \cite{HD}. Identity \eqref{c18} can be found in \cite{HD}. Identity \eqref{eq14} is Equation (14.3.3) of \cite{HD}.

In addition to above identities, we need the following congruences which is easy consequence of the binomial theorem: 
  For positive integers $k$ and $m$, we have
\begin{align}\label{lm1}
&\ell_{2k}^{m} \equiv \ell_{k}^{2m}\pmod{2},\\
\label{lm2}
&\ell_{2k}^{2m} \equiv \ell_{k}^{4m}\pmod{4}.
\end{align}	 

In order to state our congruences, we will also use Legendre symbol which is defined as follows:

Let $p$ be any odd prime and $\xi$ be any integer relatively prime to $p$, then the Legendre symbol $\left(\dfrac{\xi}{p}\right)$ is defined by  
 	\begin{equation*}
 	\left(\dfrac{\xi}{p}\right)
 	= \left\{
 	\begin{array}{cc}
 	\hspace{-.5cm}1,~\text{if $\xi$ is quadratic residue modulo $p$}, \\
 	-1,~\text{if $\xi$ is quadratic non-residue modulo $p$} .
 	\end{array}
 	\right.
 	\end{equation*}

\section{Congruences for $g_k(n)$}
\begin{theorem}
If~ $g_k(n)$ is as defined in \eqref{gk}, then~ $g_k(n)$ is the number of overpartitions of a positive integer $n$ into parts such that no part is congruent to 0 modulo $2k$ and parts congruent to $k\pmod{2k}$ have two colours.
\end{theorem}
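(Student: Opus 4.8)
The plan is to prove the statement purely at the level of generating functions, by rewriting $G_k(q)$ as an infinite product grouped according to residue classes modulo $2k$ and then reading off the combinatorial meaning of each factor. Two ingredients drive the argument: the introduction records that $\dfrac{(-q;q)_\infty}{(q;q)_\infty}$ is the generating function for overpartitions, and \eqref{eq4} with $q$ replaced by $q^k$ supplies the product form
$$\phi(q^k)=\frac{(-q^k;q^{2k})_\infty\,(q^{2k};q^{2k})_\infty}{(q^k;q^{2k})_\infty\,(-q^{2k};q^{2k})_\infty}.$$

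First I would split the overpartition generating function by residue class. Writing $\dfrac{(-q;q)_\infty}{(q;q)_\infty}=\prod_{m\geq 1}\dfrac{1+q^m}{1-q^m}$ and collecting the factors with $m\equiv r\pmod{2k}$, a class $1\leq r\leq 2k-1$ contributes $\dfrac{(-q^r;q^{2k})_\infty}{(q^r;q^{2k})_\infty}$, while the class $r\equiv 0$ contributes $\dfrac{(-q^{2k};q^{2k})_\infty}{(q^{2k};q^{2k})_\infty}$. Then I would multiply by $\phi(q^k)$ and track how two of its factors interact with these classes: the factor $\dfrac{(q^{2k};q^{2k})_\infty}{(-q^{2k};q^{2k})_\infty}$ of $\phi(q^k)$ is the exact reciprocal of the $r\equiv 0$ contribution, so it cancels that class and eliminates every part divisible by $2k$, while the surviving factor $\dfrac{(-q^k;q^{2k})_\infty}{(q^k;q^{2k})_\infty}$ of $\phi(q^k)$ merges with the $r\equiv k$ class to form its square. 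This produces
$$G_k(q)=\left(\frac{(-q^k;q^{2k})_\infty}{(q^k;q^{2k})_\infty}\right)^{2}\prod_{\substack{1\leq r\leq 2k-1\\ r\neq k}}\frac{(-q^r;q^{2k})_\infty}{(q^r;q^{2k})_\infty}.$$

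Finally I would translate this product back into a partition statement. Each factor $\dfrac{(-q^r;q^{2k})_\infty}{(q^r;q^{2k})_\infty}$ is the overpartition generating function restricted to parts in the class $r$ modulo $2k$; the missing $r\equiv 0$ factor records that no part is divisible by $2k$, and the squared $r\equiv k$ factor records that parts $\equiv k\pmod{2k}$ come in two colours. I expect the one point genuinely needing care to be this colour bookkeeping: I would spell out that, since the per-part generating function $\dfrac{1+q^m}{1-q^m}=1+2q^m+2q^{2m}+\cdots$ already encodes the overpartition rule (overline the first occurrence or not), squaring it places two independent overpartition structures on a part of size $m$, which is exactly what it means for that part to carry one of two colours with overlining permitted within each colour.

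Since the displayed product is precisely the generating function for the overpartitions described in the statement, the theorem follows. Note that the argument uses nothing special about $k$, so it is uniform in $k$ and in particular covers the case $k=2$ relevant to \eqref{gk}.
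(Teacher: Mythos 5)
Your proposal is correct and follows essentially the same route as the paper: both substitute the product form of $\phi(q^k)$ from \eqref{eq4} into \eqref{gk} to obtain the product $\dfrac{(-q;q)_\infty (-q^k;q^{2k})_\infty (q^{2k};q^{2k})_\infty}{(q;q)_\infty (q^k;q^{2k})_\infty(-q^{2k};q^{2k})_\infty}$ and then read off the overpartition interpretation. The only difference is that you spell out the residue-class decomposition, the cancellation of the $r\equiv 0$ class, and the colour bookkeeping, all of which the paper leaves implicit.
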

\begin{proof}
Employing \eqref{eq4} (with $q$ replaced by $q^k$) in right hand side of \eqref{gk}, we obtain
\begin{equation}\label{r1}
\sum_{n=0}^{\infty}g_k(n)q^n=\dfrac{(-q;q)_\infty (-q^k;q^{2k})_\infty (q^{2k};q^{2k})_\infty}{(q;q)_\infty (q^k;q^{2k})_\infty(-q^{2k};q^{2k})_\infty}.
\end{equation}The right hand side of \eqref{r1} is the generating function for the number of overpartitions of a positive integer $n$ into parts such that no part is congruent to 0 modulo $2k$ and parts congruent to $k\pmod{2k}$ have two colours. So, the proof is complete.
\end{proof}

\begin{theorem}\label{thm1}
For all integers $\alpha\geq0$,

$(i)$~Let $p\geq3$ be any prime and $1\leq j\leq (p-1)$, we have
\begin{equation}\label{a29}\hspace{-5.5cm}
\sum_{n=0}^{\infty} g_2\left(16\cdot p^{2\alpha}n + 2\cdot p^{2\alpha}\right)q^n \equiv 2\psi(q)\pmod{4},
\end{equation}
\begin{equation}\label{a30}\hspace{-4cm}
 g_2\left(16\cdot p^{2\alpha+2}n+16\cdot p^{2\alpha+1}j+2\cdot p^{{2\alpha}+2}\right) \equiv 0 \pmod{4}.
\end{equation}

$(ii)$~Let $p\geq5$ be any prime such that $\left(\dfrac{-8}{p}\right)=-1$ and $1\leq j\leq (p-1)$, we have
\begin{equation}\label{a31}
\hspace{-4.8cm}\sum_{n=0}^{\infty} g_2\left(8\cdot p^{2\alpha}n+3\cdot p^{2\alpha}\right)q^n\equiv 4\ell_1\ell_8\pmod{8},
\end{equation}
\begin{equation}\label{32}
\hspace{-3.5cm}g_2\left(8\cdot p^{2\alpha+2}n+8\cdot p^{2\alpha+1}j+3\cdot p^{2\alpha+2}\right)\equiv 0\pmod{8}.
\end{equation}
\end{theorem}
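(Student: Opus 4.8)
The plan is to begin from an explicit eta-quotient form of the generating function. Substituting $(-q;q)_\infty/(q;q)_\infty=\ell_2/\ell_1^2$ and $\phi(q^2)=\ell_4^5/(\ell_2^2\ell_8^2)$ into \eqref{gk} gives
\begin{equation*}
G_2(q)=\frac{\phi(q^2)}{\phi(-q)}=\frac{\ell_4^5}{\ell_1^2\ell_2\ell_8^2}.
\end{equation*}
The factor $2$ (resp. $4$) on the right of \eqref{a29} and \eqref{a31} signals that every identity lives inside a single residue class, so I would first secure the base cases $\alpha=0$ and then bootstrap in $\alpha$ with Lemmas~2.1 and~2.2.

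\textbf{Base cases.} For $(i)$ I apply the standard $2$-dissection $1/\phi(-q)=(\phi(q^4)+2q\psi(q^8))/\phi(-q^2)^2$ (a consequence of $\phi(q)=\phi(q^4)+2q\psi(q^8)$ and $\phi(q)\phi(-q)=\phi(-q^2)^2$) to $G_2=\phi(q^2)/\phi(-q)$. The even part gives $\sum_n g_2(2n)q^n=\phi(q)\phi(q^2)/\phi(-q)^2$, which by \eqref{lm1}--\eqref{lm2} collapses to $\phi(-q)\phi(-q^2)\pmod4$. Since $g_2(16n+2)$ is the coefficient of $q^{8n+1}$ here, I extract the exponents $\equiv1\pmod8$ from $\phi(q)\phi(q^2)=\sum_{m,k}q^{m^2+2k^2}$; a short check shows $m^2+2k^2\equiv1\pmod8$ forces $m$ odd and $k$ even, leaving $2q\,\psi(q^8)\phi(q^8)$. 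After $q^8\mapsto q$ and $\phi(q)\equiv1\pmod2$ this is \eqref{a29} at $\alpha=0$. For $(ii)$ the odd part of the same dissection gives $\sum_n g_2(2n+1)q^n=2\phi(q)\psi(q^4)/\phi(-q)^2$; a second $2$-dissection together with $\phi(q)\equiv\phi(-q)\pmod4$ isolates the $q^{4m+1}$ terms, and after $q^4\mapsto q$ one finds $\sum_m g_2(8m+3)q^m\equiv4\psi(q)\psi(q^2)\pmod8$. Finally $\psi(q)\psi(q^2)\equiv\ell_1\ell_8\pmod2$ (both reduce to $\ell_1^9\pmod2$), which upgrades this to $4\ell_1\ell_8\pmod8$, i.e. \eqref{a31} at $\alpha=0$.

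\textbf{Induction on $\alpha$.} Assuming the $\alpha$-level \eqref{a29}, I substitute the $p$-dissection \eqref{eq7} of $\psi$ into its right side. By the ``furthermore'' clause of Lemma~2.1 the residue $(p^2-1)/8\bmod p$ is attained only by the diagonal term $q^{(p^2-1)/8}\psi(q^{p^2})$; extracting that class and writing $n=p^2n'+(p^2-1)/8$ turns the argument of $g_2$ into $16p^{2\alpha+2}n'+2p^{2\alpha+2}$, which is the $(\alpha+1)$-level identity. Choosing instead $n=p^2n'+pj+(p^2-1)/8$ with $1\le j\le p-1$ stays in the diagonal class but lies off the $q^{p^2}$-lattice of the surviving term, so the coefficient is $0\pmod4$; this is \eqref{a30}. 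The same mechanism proves $(ii)$ via \eqref{eq8}, the one difference being that the right side $4\ell_1\ell_8$ is a product, so I must dissect $\ell_1$ (in $q$) and $\ell_8$ (in $q^8$) at once and control all four cross terms.

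\textbf{Main obstacle.} The crux is exactly this product step. Writing $\ell_1\ell_8=\sum\pm q^m$ with $24m+9=X^2+8Y^2$, where $X=6t+1$ and $Y=6s+1$ come from the two pentagonal expansions, the diagonal class is $m\equiv3(p^2-1)/8\pmod p$, i.e. $X^2+8Y^2\equiv0\pmod p$. Here the hypothesis $\left(\frac{-8}{p}\right)=-1$ is decisive: it forces $p\mid X$ and $p\mid Y$ simultaneously, so of the four products only $q^{3(p^2-1)/8}\ell_{p^2}\ell_{8p^2}$ meets that class. With the cross terms eliminated, $m=p^2m'+3(p^2-1)/8$ reproduces \eqref{a31} at level $\alpha+1$ and $m=p^2m'+pj+3(p^2-1)/8$ yields \eqref{32}. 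The contrast with $(i)$, valid for every $p\ge3$ with no arithmetic side condition, is precisely that the $\psi$-diagonal in Lemma~2.1 is a single theta factor, whereas the $\ell_1\ell_8$-diagonal is a product that needs $-8$ to be a quadratic non-residue in order to discard the mixed terms.
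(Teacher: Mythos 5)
Your argument is correct, and for the part of the theorem that carries the real content --- the induction on $\alpha$ --- it coincides with the paper's proof: both push \eqref{a29} up one level by inserting the $p$-dissection \eqref{eq7} of $\psi$ and using the ``furthermore'' clause of Lemma 2.1 to see that only $q^{(p^2-1)/8}\psi(q^{p^2})$ meets the extracted residue class, and both handle \eqref{a31} by dissecting each factor of $\ell_1\ell_8$ via \eqref{eq8} and observing that $(6t+1)^2+8(6s+1)^2\equiv 0\pmod p$ has only the diagonal solution when $\left(\frac{-8}{p}\right)=-1$, after which \eqref{a30} and \eqref{32} drop out of the intermediate identities in $q^p$ exactly as you describe. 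Where you genuinely depart from the paper is in the base cases: the paper imports the dissections $\sum g_2(8n+2)q^n\equiv 6\ell_2^7/\ell_4^2\pmod 4$ and $\sum g_2(4n+3)q^n\equiv 4\ell_4^{11}/(\ell_2^5\ell_8^2)\pmod 8$ wholesale from Gupta and Rana \cite{GM1} and performs one further $2$-dissection, whereas you rederive everything from $G_2=\phi(q^2)/\phi(-q)$ using $\phi(q)=\phi(q^4)+2q\psi(q^8)$, $\phi(q)\phi(-q)=\phi(-q^2)^2$ and $\phi(-q)^2\equiv 1\pmod 4$. Your reductions check out ($\phi(q)\phi(q^2)\equiv\phi(-q)\phi(-q^2)\pmod 4$, the exponent analysis $m^2+2k^2\equiv 1\pmod 8$, and $\psi(q)\psi(q^2)\equiv\ell_1\ell_8\equiv\ell_1^9\pmod 2$ are all correct), and they land on the same $\alpha=0$ identities, so the two proofs are interchangeable from that point on. The trade-off is the usual one: your route is self-contained and makes visible \emph{why} the progressions $16n+2$ and $8n+3$ carry theta-type generating functions, at the cost of reproving dissections already available in \cite{GM1}; the paper's route is shorter on the page but opaque without the cited source.
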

\begin{proof}
Setting $k=2$ in \eqref{gk} and simplifying using \eqref{eq4} (with $q$ replaced by $q^2$) and \eqref{jaco}, we obtain
 \begin{equation}\label{e14}
 \sum_{n=0}^{\infty}g_2(n)q^n = \dfrac{(-q;q)_{\infty}}{(q;q)_{\infty}}(q^4,-q^2,-q^2;q^4)_{\infty} = \dfrac{\ell_4^5}{\ell_1^2\ell_2\ell_8^2}.
 \end{equation}
(i)~From \cite[p. 9 Theorem 4]{GM1}, we note that
 \begin{equation}\label{a32}
 \sum_{n=0}^{\infty}g_2(8n+2)q^n \equiv 6\dfrac{\ell_2^7}{\ell_4^2}\pmod{4}.
 \end{equation}Extracting the terms involving $q^{2n}$ from \eqref{a32}, replacing $q^2$ by $q$ and using \eqref{lm1} and \eqref{eq5}, we obtain
 $$\sum_{n=0}^{\infty}g_2(16n+2)q^n \equiv2\psi(q)\pmod{4},$$which is the $\alpha=0$ case of \eqref{a29}. Assume that \eqref{a29} is true for some $\alpha\geq0$. Now, employing \eqref{eq7} in \eqref{a29} and extracting the terms involving $q^{pn+(p^{2}-1)/8}$, dividing by $q^{(p^{2}-1)/8}$ and replacing $q^p$ by $q$, we obtain
   \begin{equation}\label{g1}
   \sum_{n=0}^{\infty}g_2\left(16\cdot p^{2\alpha+1}n+2\cdot p^{2\alpha+2}\right)q^n\equiv2\psi(q^p)\pmod{4}.
   \end{equation} Extracting the terms involving $q^{pn}$ from \eqref{g1} and then replacing $q^p$ by $q$, we obtain
   $$\sum_{n=0}^{\infty}g_2\left(16\cdot p^{2\alpha+2}n+2\cdot p^{2\alpha+2}\right)q^n\equiv2\psi(q)\pmod{4},$$
  which is the $\alpha+1$ case of \eqref{a29}. Hence, by the method of induction, we complete the proof of \eqref{a29}. By extracting the terms involving $q^{pn+j}$, $1\leq j\leq(p-1)$ from \eqref{g1}, we arrive at \eqref{a30}.
 
 (ii)~From \cite[p. 10 Theorem 4]{GM1}, we note that
 \begin{equation}\label{a33}
  \sum_{n=0}^{\infty}g_2(4n+3)q^n \equiv 4\dfrac{\ell_4^{11}}{\ell_2^5\ell_8^2}\pmod{8}.
 \end{equation}Extracting the terms involving $q^{2n}$ from \eqref{a33}, replacing $q^2$ by $q$ and then using \eqref{lm1}, we obtain
 $$\sum_{n=0}^{\infty}g_2(8n+3)q^n \equiv4\ell_1\ell_8\pmod{8},$$which is the $\alpha=0$ case of \eqref{a31}. Assume that \eqref{a31} is true for some $\alpha\geq0$. Now, substituting \eqref{eq8} in \eqref{a31}, we obtain
 \begin{align}\label{g2}
 &\sum_{n=0}^{\infty} g_2\left(8\cdot p^{2\alpha}n+3\cdot p^{2\alpha}\right)q^n\notag\\ &\equiv4\Big[\sum_{\substack{t={-(p-1)/2} \\ t \ne {(\pm p-1)/6}}}^{(p-1)/2} (-1)^{t} q^{(3t^{2}+t)/2} f\left(-q^{\left(3p^{2}+(6t+1)p\right)/2}, -q^{\left(3p^{2}-(6t+1)p\right)/2}\right)\notag\\
 & \hspace{8cm}+ (-1)^{(\pm p-1)/6} q^{(p^{2}-1)/24} \ell_{p^2}\Big]\notag\\ &\times\Big[\sum_{\substack{m={-(p-1)/2} \\ m \ne {(\pm p-1)/6}}}^{(p-1)/2} (-1)^{m} q^{8(3m^{2}+m)/2} f\left(-q^{8\left(3p^{2}+(6m+1)p\right)/2}, -q^{8\left(3p^{2}-(6m+1)p\right)/2}\right)\notag\\
 &\hspace{8cm} + (-1)^{(\pm p-1)/6} q^{8(p^{2}-1)/24} \ell_{8p^2}\Big]\pmod{8}.
 \end{align}Consider, the congruence
 \begin{equation*}
 \dfrac{(3t^2+t)}{2}+8\left(\dfrac{m^2+m}{2}\right)\equiv 9\left(\dfrac{p^2-1}{24}\right)\pmod{p},
 \end{equation*}which is equivalent to \begin{equation}\label{g3}
 (6t+1)^2+8(6m+1)^2\equiv0\pmod{p}.
 \end{equation}For $\left(\dfrac{-8}{p}\right)=-1$, the congruence \eqref{g3} has only solution $t=m=(\pm p-1)/6$. Therefore, extracting the terms involving $q^{pn+9(p^{2}-1)/24}$ from \eqref{g2}, dividing by $q^{9(p^{2}-1)/24}$ and replacing $q^p$ by $q$, we obtain
 \begin{equation}\label{g4}
 \sum_{n=0}^{\infty} g_2\left(8\cdot p^{2\alpha+1}n+3\cdot p^{2\alpha+2}\right)q^n\equiv 4\ell_p\ell_{8p}\pmod{8}.
 \end{equation}Extracting the terms involving $q^{pn}$ from \eqref{g4} and then replacing $q^p$ by $q$, we obtain
 $$g_2\left(8\cdot p^{2\alpha+2}n+3\cdot p^{2\alpha+2}\right)q^n\equiv 4\ell_1\ell_8\pmod{8},$$ which is the $\alpha+1$ case of \eqref{a31}. Hence, by the method of induction, we complete the proof of \eqref{a31}. The result \eqref{32} follows from \eqref{g4} by extracting the terms involving $q^{pn+j}$, $1\leq j\leq(p-1)$.
\end{proof}

\section{Partition-theoretic interpretations and congruences for the identities \eqref{c1}-\eqref{c14}}
We first give partition-theoretic interpretations of \eqref{c1}. The partition-theoretic interpretations of \eqref{c7}, \eqref{c10} and \eqref{c12} can be found in \cite{SN}.
\begin{theorem}
If $h(n)$ is as defined in \eqref{c1}, then $h(n)$ is the number of partitions of a positive integer $n$ into parts such that no part $\equiv0\pmod{8}$, parts $\equiv 2,6\pmod{8}$ have one colour and parts $\equiv 1,3,4,5,7\pmod{8}$ have two colours.
\end{theorem}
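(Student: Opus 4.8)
The plan is to rewrite the product side of \eqref{c1} as a single infinite product whose factors are grouped by residue classes modulo $8$, and then to read off the number of colours attached to each class directly from the exponent of the corresponding factor, mirroring the argument used for the partition interpretation of $g_k(n)$. Throughout I would use the convention recalled in the introduction that $1/(q^r;q^s)_\infty^t$ is the generating function for partitions into parts $\equiv r\pmod{s}$ in which each such part carries $t$ colours.

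First I would put the two factors on the right of \eqref{c1} over a common product. Since $(-q;q)_\infty(q;q)_\infty=(q^2;q^2)_\infty$ we have $\frac{(-q;q)_\infty}{(q;q)_\infty}=\frac{\ell_2}{\ell_1^2}$, and writing $(q^4,-q^4,-q^4;q^4)_\infty=\ell_4(-q^4;q^4)_\infty^2$ together with $(-q^4;q^4)_\infty=\ell_8/\ell_4$ gives $\frac{\ell_8^2}{\ell_4}$ for the second factor. Multiplying the two yields
$$H(q)=\frac{\ell_2\,\ell_8^2}{\ell_1^2\,\ell_4}=\frac{(q^2;q^2)_\infty(q^8;q^8)_\infty^2}{(q;q)_\infty^2(q^4;q^4)_\infty}.$$

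Next I would expand each of these four products as $\prod_{k\ge1}(1-q^k)^{e_k}$ and track the net exponent $e_k$ as a function of $k\bmod 8$: the factor $(q;q)_\infty^{-2}$ contributes $-2$ for every $k$, while $(q^2;q^2)_\infty$ adds $+1$ when $2\mid k$, $(q^4;q^4)_\infty^{-1}$ adds $-1$ when $4\mid k$, and $(q^8;q^8)_\infty^{2}$ adds $+2$ when $8\mid k$. Running through the eight residues gives $e_k=0$ for $k\equiv0$, $e_k=-1$ for $k\equiv2,6$, and $e_k=-2$ for $k\equiv1,3,4,5,7\pmod{8}$, so that
$$H(q)=\frac{1}{(q^2,q^6;q^8)_\infty\,(q,q^3,q^4,q^5,q^7;q^8)_\infty^{2}}.$$
By the colour-partition convention this is exactly the generating function for partitions of $n$ with no part $\equiv0\pmod 8$, with parts $\equiv2,6\pmod 8$ in one colour, and with parts $\equiv1,3,4,5,7\pmod 8$ in two colours, which completes the proof.

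The computation is elementary, so I expect no genuine obstacle; the one place demanding care is the exponent bookkeeping in the third step, where the four products must be combined correctly for each residue class modulo $8$. In particular, the one-colour versus two-colour split arises precisely from distinguishing $k\equiv2,6$ (only $2\mid k$) from $k\equiv4$ (where $(q^4;q^4)_\infty^{-1}$ also acts) and from $k\equiv0$ (where the $(q^8;q^8)_\infty^{2}$ factor cancels the remaining contributions), so these divisibility cases are the ones to verify most carefully.
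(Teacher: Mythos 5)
Your proposal is correct and follows essentially the same route as the paper: both reduce the right-hand side of \eqref{c1} to $\ell_2\ell_8^2/(\ell_1^2\ell_4)$ and then sort the factors by residue class modulo $8$ (the paper by rewriting each product to base $q^8$, you by tracking the net exponent $e_k$), arriving at the same final product and colour counts.
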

\begin{proof}
 Simplifying right hand side of \eqref{c1}, we obtain
\begin{equation}\label{c2}
\sum_{n=0}^{\infty}h(n)q^n=\dfrac{(-q;q)_\infty}{(q;q)_\infty}(q^4,-q^4,-q^4;q^4)_\infty =\dfrac{(q^2;q^2)_\infty (q^8;q^8)_{\infty}^2}{(q;q)_{\infty}^2 (q^4;q^4)_{\infty}}.
\end{equation}
Changing the base $q$ in $(q^2;q^2)_\infty$, $(q^4;q^4)_\infty$ and $(q;q)_\infty$ to $q^8$, we obtain
\begin{align}\label{c3}
& (q^2;q^2)_\infty=(q^2,q^4,q^6,q^8;q^8)_\infty,\\
\label{c4}
& (q^4;q^4)_\infty=(q^4,q^8;q^8)_\infty,\\ 
\label{c5}
& (q;q)_\infty=(q,q^2,q^3,q^4,q^5,q^6,q^7,q^8;q^8)_\infty.
\end{align}
Employing \eqref{c3}-\eqref{c5} in \eqref{c2}, we arrive at our desired result.
\end{proof}
\begin{theorem}\label{thm4}For all integers $\alpha\geq0$,

$(i)$~Let $p\geq5$ be any prime such that $\left(\dfrac{-2}{p}\right)=-1$ and $1\leq j\leq (p-1)$, we have
\begin{equation}\label{a34}
\hspace{-5.2cm}\sum_{n=0}^{\infty} h\left(4\cdot p^{2\alpha}n+\dfrac{p^{2\alpha}-1}{2}\right)q^n\equiv \ell_1\ell_2\pmod{4},
\end{equation}
\begin{equation}\label{a35}
\hspace{-4cm}h\left(4\cdot p^{2\alpha+2}n+4\cdot p^{2\alpha+1}j+\dfrac{p^{2\alpha+2}-1}{2}\right)\equiv 0\pmod{4}.
\end{equation}

$(ii)$~Let $p\geq5$ be any prime such that $\left(\dfrac{-18}{p}\right)=-1$ and $1\leq j\leq (p-1)$, we have
\begin{equation}\label{a36}
\hspace{-3.8cm}\sum_{n=0}^{\infty} h\left(12\cdot p^{2\alpha}n+\dfrac{19\cdot p^{2\alpha}-1}{2}\right)q^n\equiv 2\ell_1\psi(q^6)\pmod{4},
\end{equation}
\begin{equation}\label{a37}
\hspace{-2.6cm}h\left(12\cdot p^{2\alpha+2}n+12\cdot p^{2\alpha+1}j+\dfrac{19\cdot p^{2\alpha+2}-1}{2}\right)\equiv 0\pmod{4}.
\end{equation}

$(iii)$~Let $p\geq5$ be any prime such that $\left(\dfrac{-2}{p}\right)=-1$ and $1\leq j\leq (p-1)$, we have
\begin{equation}\label{a38}
\hspace{-3.8cm}\sum_{n=0}^{\infty} h\left(12\cdot p^{2\alpha}n+\dfrac{11\cdot p^{2\alpha}-1}{2}\right)q^n\equiv 2\ell_2\psi(q^3)\pmod{4},
\end{equation}
\begin{equation}\label{a39}
\hspace{-2.6cm}h\left(12\cdot p^{2\alpha+2}n+12\cdot p^{2\alpha+1}j+\dfrac{11\cdot p^{2\alpha+2}-1}{2}\right)\equiv 0\pmod{4}.
\end{equation}

$(iv)$~Let $p\geq3$ be any prime such that $\left(\dfrac{-2}{p}\right)=-1$ and $1\leq j\leq (p-1)$ we have
\begin{equation}\label{2}
\hspace{-3cm}\sum_{n=0}^{\infty} h\left(4\cdot p^{2\alpha}n+\dfrac{3\cdot p^{2\alpha}-1}{2}\right)q^n\equiv 2\psi(q)\psi(q^2)\pmod{8},
\end{equation}
\begin{equation}\label{3}
\hspace{-2.8cm}h\left(4\cdot p^{2\alpha+2}n+4\cdot p^{2\alpha+1}j+\dfrac{3\cdot p^{2\alpha+2}-1}{2}\right)\equiv 0\pmod{8}.
\end{equation}
\end{theorem}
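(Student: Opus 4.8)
The plan is to prove all four parts by induction on $\alpha$, mirroring the template used for Theorem \ref{thm1}: first establish the $\alpha=0$ base case by dissecting the product representation \eqref{c2}, namely $H(q)=\ell_2\ell_8^2/(\ell_1^2\ell_4)$, and then propagate from $\alpha$ to $\alpha+1$ by substituting the dissection formulas \eqref{eq7} and \eqref{eq8} into the theta products standing on the right-hand sides, extracting the central progression, and replacing $q^p$ by $q$.

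First I would set up the base cases. For (i) and (iv) I would repeatedly $2$-dissect \eqref{c2}, using the standard $2$-dissections of $\phi$ and $\psi$ together with the reductions \eqref{lm1}-\eqref{lm2}, so as to isolate the progressions $h(4n)$ and $h(4n+1)$; tracking which pieces are divisible by $4$ (resp. by $8$) should collapse the resulting generating functions to $\ell_1\ell_2$ and $2\psi(q)\psi(q^2)$, giving the $\alpha=0$ instances of \eqref{a34} and \eqref{2}. For (ii) and (iii), observe that $12n+9$ and $12n+5$ both lie in the class $4m+1$; starting from the intermediate congruence $\sum_m h(4m+1)q^m\equiv 2\psi(q)\psi(q^2)\pmod 4$, I would apply the $3$-dissection \eqref{eq14} of $\psi(q)=\ell_2^2/\ell_1$ (and its image under $q\mapsto q^2$) and extract the residues $m\equiv 2$ and $m\equiv 1\pmod 3$, respectively. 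A short reduction modulo $2$ via \eqref{lm1} then identifies the surviving products with $2\ell_1\psi(q^6)$ and $2\ell_2\psi(q^3)$, yielding the base cases of \eqref{a36} and \eqref{a38}.

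The inductive step is uniform. Expanding the right-hand-side theta product through \eqref{eq7}/\eqref{eq8} produces a double sum indexed by $(t,m)$, and after extracting the arithmetic progression attached to the central pair one recovers the $\alpha+1$ case, provided every off-diagonal pair vanishes. The off-diagonal terms vanish exactly when the relevant sum of two scaled squares is $\not\equiv 0\pmod p$, and this is governed by a Legendre symbol: for (i) the exponent condition reduces to $(6t+1)^2+2(6m+1)^2\equiv 0\pmod p$, controlled by $\left(\dfrac{-2}{p}\right)$; for (ii) to $(6t+1)^2+18(2m+1)^2\equiv 0$, controlled by $\left(\dfrac{-18}{p}\right)$; for (iii) to $2(6t+1)^2+9(2m+1)^2\equiv 0$, again giving $\left(\dfrac{-2}{p}\right)$; and for (iv) to $(2t+1)^2+2(2m+1)^2\equiv 0$, giving $\left(\dfrac{-2}{p}\right)$. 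The stated hypotheses make each binary form anisotropic, forcing $t=m=(\pm p-1)/6$ (resp. the analogous central index for the $\psi$-dissection), so only the central term survives. The progression offsets $\frac{p^{2\alpha}-1}{2}$, $\frac{19p^{2\alpha}-1}{2}$, $\dots$ are precisely the accumulation of the central exponents $\frac{p^2-1}{8}$, $\frac{19(p^2-1)}{24}$, $\dots$ across the induction, and the vanishing congruences \eqref{a35}, \eqref{a37}, \eqref{a39}, \eqref{3} follow by instead extracting an off-diagonal residue $q^{pn+j}$ with $1\le j\le p-1$. Note that (iv) uses only \eqref{eq7}, which holds for every odd prime, whereas (i)-(iii) invoke \eqref{eq8} and hence require $p\ge 5$, exactly matching the stated hypotheses.

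I expect the genuine obstacle to be the base cases rather than the induction. The iterated $2$-dissections for (i) and (iv) demand careful bookkeeping of which terms are divisible by $4$ or $8$, and the $3$-dissection extractions for (ii) and (iii) require correctly matching residues modulo $3$ before the final reduction modulo $2$. Once the four base identities are secured, the inductive machinery is identical to that of Theorem \ref{thm1}, the only essentially new verification being that each quadratic form is anisotropic under the prescribed Legendre condition.
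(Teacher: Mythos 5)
Your proposal is correct, and its inductive engine (substitute \eqref{eq7}/\eqref{eq8} into the base-case product, show the off-central pairs die because the associated binary quadratic form is anisotropic under the stated Legendre condition, extract the central progression) is exactly what the paper does; your identification of the forms $(6t+1)^2+2(6m+1)^2$, $(6t+1)^2+18(2m+1)^2$, $2(6t+1)^2+9(2m+1)^2$ and $(2t+1)^2+2(2m+1)^2$ is right, and you correctly note that (iv) needs only \eqref{eq7} and hence admits $p\geq 3$. Where you genuinely diverge is in the base cases. The paper does not dissect $H(q)=\ell_2\ell_8^2/(\ell_1^2\ell_4)$ from scratch: it imports four intermediate generating functions from Gupta--Rana \cite{GM1} --- namely closed forms for $\sum h(2n)q^n$, $\sum h(2n+1)q^n$ and congruences for $\sum h(6n+3)q^n$, $\sum h(6n+5)q^n$ --- and then performs a single $2$-adic extraction on each, using \eqref{lm1}--\eqref{lm2}, to reach $\ell_1\ell_2$, $2\psi(q)\psi(q^2)$, $2\ell_1\psi(q^6)$ and $2\ell_2\psi(q^3)$. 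In particular, parts (ii) and (iii) in the paper descend from the mod-$6$ classes $6n+3$ and $6n+5$, whereas you reach $12n+9$ and $12n+5$ by $3$-dissecting $2\psi(q)\psi(q^2)$ on the class $4m+1$ via \eqref{eq14}; I checked that your route delivers the same two congruences ($\ell_1\ell_3^2\ell_6^2\equiv\ell_1\psi(q^6)$ and $\ell_2\ell_3^3\equiv\ell_2\psi(q^3)$ modulo $2$), so it is a valid, self-contained alternative at the cost of the extra dissection bookkeeping you yourself flag. Your version has the merit of not leaning on \cite{GM1} and of making explicit the quadratic-form verifications that the paper compresses into ``proceeding as in (ii) of Theorem~\ref{thm1}''; the paper's version is shorter because the heavy lifting is outsourced.
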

\begin{proof}
(i)~From \cite[p. 10 Theorem 5]{GM1}, we note that
\begin{equation}\label{a40}
\sum_{n=0}^{\infty}h(2n)q^n=\dfrac{\ell_4^7}{\ell_1^4\ell_2\ell_8^2}.
\end{equation}Using \eqref{lm2} in \eqref{a40} and then extracting the terms involving $q^{2n}$, replacing $q^2$ by $q$, we obtain
$$\sum_{n=0}^{\infty}h(4n)q^n\equiv\ell_1\ell_2\pmod{4},$$which is the $\alpha=0$ case of \eqref{a34}. Now, proceeding in the same way as in (ii) of Theorem~\ref{thm1}, we arrive at \eqref{a34} and \eqref{a35}.

(ii)~From \cite[p. 11 Theorem 5]{GM1}, we note that
\begin{equation}\label{a41}
\sum_{n=0}^{\infty}h(6n+3)q^n\equiv 2q\dfrac{\ell_4\ell_6^2\ell_{24}^2}{\ell_2\ell_{12}^2}\pmod{4}.
\end{equation}Extracting the terms involving $q^{2n+1}$ from \eqref{a41}, dividing by $q$, replacing $q^2$ by $q$ and then using \eqref{lm1}, we obtain
$$\sum_{n=0}^{\infty}h(12n+9)q^n\equiv2\ell_1\psi(q^6)\pmod{4},$$which is the $\alpha=0$ case of \eqref{a36}. Now, proceeding in the same way as in (ii) of Theorem~\ref{thm1}, we arrive at the results \eqref{a36} and \eqref{a37}.

(iii)~From \cite[p. 11 Theorem 5]{GM1}, we note that
\begin{equation}\label{a42}
\sum_{n=0}^{\infty}h(6n+5)q^n\equiv 2\dfrac{\ell_2^2\ell_{12}^2}{\ell_6}\pmod{4}.
\end{equation}Extracting the terms involving $q^{2n}$ from \eqref{a42}, replacing $q^2$ by $q$ and then using \eqref{lm1}, we obtain
$$\sum_{n=0}^{\infty}h(12n+5)q^n\equiv 2\ell_2\psi(q^3)\pmod{4},$$which is the $\alpha=0$ case of \eqref{a38}. Now, proceeding in the same way as in (ii) of Theorem~\ref{thm1}, we arrive at \eqref{a38} and \eqref{a39}.

(iv)~From \cite[p. 10 Theorem 5]{GM1}, we note that
\begin{equation}\label{a43}
\sum_{n=0}^{\infty}h(2n+1)q^n=2\dfrac{\ell_2\ell_4\ell_8^2}{\ell_1^4}.
\end{equation}Using \eqref{lm2} in \eqref{a43} and then extracting the terms involving $q^{2n}$, replacing $q^2$ by $q$, we obtain
$$\sum_{n=0}^{\infty}h(4n+1)q^n\equiv 2\dfrac{\ell_2\ell_4^2}{\ell_1}=2\dfrac{\ell_2^2\ell_4^2}{\ell_1\ell_2}=2\psi(q)\psi(q^2)\pmod{8},$$ which is the $\alpha=0$ case of \eqref{2}. Now, proceeding in the same way as in (ii) of Theorem~\ref{thm1}, we arrive at \eqref{2} and \eqref{3}.
\end{proof}
\begin{theorem}
For all integer $\alpha\geq0$,

$(i)$~Let $p\geq3$ be any prime and $1\leq j\leq (p-1)$, we have
\begin{equation}\label{4}
\hspace{-5.5cm}\sum_{n=0}^{\infty} t\left(3\cdot p^{2\alpha}n+\dfrac{3\cdot p^{2\alpha}-3}{8}\right)q^n\equiv \psi(q)\pmod{2},
\end{equation}
\begin{equation}\label{5}
\hspace{-4cm}t\left(3\cdot p^{2\alpha+2}n+3\cdot p^{2\alpha+1}j+\dfrac{3\cdot p^{2\alpha+2}-3}{8}\right)\equiv 0\pmod{2}.
\end{equation}

$(ii)$~Let $p\geq5$ be any prime such that $\left(\dfrac{-2}{p}\right)=-1$ and $1\leq j\leq (p-1)$, we have
\begin{equation}\label{6}
\hspace{-3.8cm}\sum_{n=0}^{\infty} t\left(3\cdot p^{2\alpha}n+\dfrac{11\cdot p^{2\alpha}-3}{8}\right)q^n\equiv 2\ell_2\psi(q^3)\pmod{4},
\end{equation}
\begin{equation}\label{7}
\hspace{-3cm}t\left(3\cdot p^{2\alpha+2}n+3\cdot p^{2\alpha+1}j+\dfrac{11\cdot p^{2\alpha+2}-3}{8}\right)\equiv 0\pmod{4}.
\end{equation}

$(iii)$~Let $p\geq5$ be any prime such that $\left(\dfrac{-18}{p}\right)=-1$ and $1\leq j\leq (p-1)$, we have
\begin{equation}\label{8}
\hspace{-3.5cm}\sum_{n=0}^{\infty} t\left(3\cdot p^{2\alpha}n+\dfrac{19\cdot p^{2\alpha}-3}{8}\right)q^n\equiv 4\ell_1\psi(q^6)\pmod{8},
\end{equation}
\begin{equation}\label{9}
\hspace{-2.6cm}t\left(3\cdot p^{2\alpha+2}n+3\cdot p^{2\alpha+1}j+\dfrac{19\cdot p^{2\alpha+2}-3}{8}\right)\equiv 0\pmod{8}.
\end{equation}
\end{theorem}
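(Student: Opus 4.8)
The plan is to reduce the right-hand side of \eqref{c7} to a single eta-quotient, read off the three $\alpha=0$ congruences from a $3$-dissection, and then propagate each of them by induction on $\alpha$ using the $p$-dissections \eqref{eq7} and \eqref{eq8}. Using Jacobi's triple product \eqref{jaco} one finds $(q^{12},q^3,q^9;q^{12})_\infty=\ell_3\ell_{12}/\ell_6$, and since $(-q;q)_\infty/(q;q)_\infty=\ell_2/\ell_1^2$, identity \eqref{c7} becomes
$$\sum_{n=0}^{\infty}t(n)q^n=\frac{\ell_2\ell_3\ell_{12}}{\ell_1^2\ell_6}=\frac{\ell_3\ell_{12}}{\ell_6}\cdot\frac{\ell_2}{\ell_1^2}.$$
Here $\ell_3\ell_{12}/\ell_6$ is a power series in $q^3$, so multiplying it by the $3$-dissection \eqref{c18} of $\ell_2/\ell_1^2$ splits $T(q)$ by residue class modulo $3$. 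Extracting the parts with exponent $\equiv 0,1,2\pmod 3$ and replacing $q^3$ by $q$ gives
$$\sum_{n=0}^{\infty}t(3n)q^n=\frac{\ell_2^3\ell_3^6\ell_4}{\ell_1^7\ell_6^3},\qquad \sum_{n=0}^{\infty}t(3n+1)q^n=2\,\frac{\ell_2^2\ell_3^3\ell_4}{\ell_1^6},\qquad \sum_{n=0}^{\infty}t(3n+2)q^n=4\,\frac{\ell_2\ell_4\ell_6^3}{\ell_1^5}.$$
Reducing these modulo $2$, $4$ and $8$ respectively with \eqref{lm1}, \eqref{lm2} and \eqref{eq5} yields exactly the $\alpha=0$ cases $\sum t(3n)q^n\equiv\psi(q)\pmod 2$, $\sum t(3n+1)q^n\equiv 2\ell_2\psi(q^3)\pmod 4$ and $\sum t(3n+2)q^n\equiv 4\ell_1\psi(q^6)\pmod 8$.

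For part $(i)$ the induction is the single-theta case. Assuming \eqref{4} holds for some $\alpha$, I would substitute \eqref{eq7} into $\psi(q)$ and extract the progression $pn+(p^2-1)/8$. By the ``furthermore'' clause accompanying \eqref{eq7}, none of the summation terms has exponent $\equiv(p^2-1)/8\pmod p$, so only $q^{(p^2-1)/8}\psi(q^{p^2})$ survives; dividing by $q^{(p^2-1)/8}$ and replacing $q^p$ by $q$ gives the intermediate congruence $\sum t\big(3p^{2\alpha+1}n+(3p^{2\alpha+2}-3)/8\big)q^n\equiv\psi(q^p)\pmod 2$. Extracting $q^{pn}$ then produces the $\alpha+1$ case \eqref{4}, while extracting $q^{pn+j}$ for $1\le j\le p-1$ gives the vanishing statement \eqref{5}, since $\psi(q^p)$ is supported on exponents divisible by $p$. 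The shift is consistent because $3p^{2\alpha}\big(pn+(p^2-1)/8\big)+(3p^{2\alpha}-3)/8=3p^{2\alpha+1}n+(3p^{2\alpha+2}-3)/8$.

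Parts $(ii)$ and $(iii)$ run the same induction, but now a product of two thetas must be dissected. For $(ii)$, into $2\ell_2\psi(q^3)$ I would insert \eqref{eq8} (with $q\mapsto q^2$) for $\ell_2$ and \eqref{eq7} (with $q\mapsto q^3$) for $\psi(q^3)$, multiply, and isolate the diagonal term $2\ell_{2p^2}\psi(q^{3p^2})$, whose exponent is $\tfrac{p^2-1}{12}+\tfrac{3(p^2-1)}{8}=\tfrac{11(p^2-1)}{24}$. A generic product term lands in this progression only when $2(6t+1)^2+9(2m+1)^2\equiv 0\pmod p$; since $9$ is a square and $\left(\dfrac{2}{p}\right)=\left(\dfrac{2^{-1}}{p}\right)$, a nonzero solution would force $\left(\dfrac{-2}{p}\right)=1$, contradicting the hypothesis, so only $6t+1\equiv 2m+1\equiv 0\pmod p$ remains. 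As the surviving coefficient is $2$ times a sign, it reduces to $2\ell_{2p}\psi(q^{3p})\pmod 4$ after $q^p\mapsto q$, and the two extractions $q^{pn}$ and $q^{pn+j}$ give \eqref{6} and \eqref{7}. Part $(iii)$ is identical, dissecting $\ell_1$ by \eqref{eq8} and $\psi(q^6)$ by \eqref{eq7}; the diagonal exponent is $\tfrac{19(p^2-1)}{24}$ and the survival condition is $(6t+1)^2+18(2m+1)^2\equiv 0\pmod p$, excluded by $\left(\dfrac{-18}{p}\right)=-1$, yielding \eqref{8} and \eqref{9} (here the coefficient is $4$ times a sign, harmless modulo $8$).

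The main obstacle I expect is the quadratic-residue bookkeeping in $(ii)$ and $(iii)$: one must complete the square in the mixed exponent $2\cdot\tfrac{3t^2+t}{2}+3\cdot\tfrac{m^2+m}{2}$ (respectively $\tfrac{3t^2+t}{2}+6\cdot\tfrac{m^2+m}{2}$) so that matching the target progression collapses to a single binary quadratic form, check that its associated symbol is exactly $\left(\dfrac{-2}{p}\right)$ (respectively $\left(\dfrac{-18}{p}\right)$), and verify that the diagonal exponent reproduces the shifts $\tfrac{11\cdot p^{2\alpha}-3}{8}$ and $\tfrac{19\cdot p^{2\alpha}-3}{8}$ under the substitution $q^p\mapsto q$. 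The modular reductions underlying the base cases are routine consequences of \eqref{lm1}--\eqref{lm2}, so essentially all of the difficulty is concentrated in this extraction step.
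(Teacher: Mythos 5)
Your proposal is correct and follows essentially the same route as the paper: rewriting \eqref{c7} as $\ell_2\ell_3\ell_{12}/(\ell_1^2\ell_6)$, applying the $3$-dissection \eqref{c18} to obtain the three base cases modulo $2$, $4$ and $8$, and then iterating with the $p$-dissections \eqref{eq7} and \eqref{eq8} exactly as in the paper's Theorem~\ref{thm1}. Your explicit quadratic-form conditions $2(6t+1)^2+9(2m+1)^2\equiv 0$ and $(6t+1)^2+18(2m+1)^2\equiv 0 \pmod p$ correctly account for the hypotheses $\left(\tfrac{-2}{p}\right)=-1$ and $\left(\tfrac{-18}{p}\right)=-1$, and your dissected series agree with the paper's equation \eqref{11} (whose first term there contains a typographical slip, $\ell_6^3$ for $\ell_{18}^3$, that your version silently corrects).
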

\begin{proof}
We have
\begin{equation}\label{10}
\sum_{n=0}^{\infty}t(n)q^n=\dfrac{(-q;q)_\infty}{(q;q)_\infty}(q^{12},q^3,q^9;q^{12})_\infty=\dfrac{\ell_2\ell_3\ell_{12}}{\ell_1^2\ell_6}.
\end{equation}Employing \eqref{c18} in \eqref{10}, we obtain
\begin{equation}\label{11}
\sum_{n=0}^{\infty}t(n)q^n=\dfrac{\ell_6^3\ell_9^6\ell_{12}}{\ell_3^7\ell_6^3}+2q\dfrac{\ell_6^2\ell_9^3\ell_{12}}{\ell_3^6}+4q^2\dfrac{\ell_6\ell_{12}\ell_{18}^3}{\ell_3^5}.
\end{equation}
(i)~Extracting the terms involving $q^{3n}$ from \eqref{11}, replacing by $q^3$ by $q$ and then using \eqref{lm1}, we obtain
$$ \sum_{n=0}^{\infty}t(3n)q^n\equiv\psi(q)\pmod{2},$$ which is the $\alpha=0$ case of \eqref{4}. Now, proceeding in the same way as in (i) of Theorem~\ref{thm1}, we arrive at \eqref{4} and \eqref{5}.

(ii)~Extracting the terms involving $q^{3n+1}$ from \eqref{11}, dividing by $q$, replacing by $q^3$ by $q$ and then using \eqref{eq5} and \eqref{lm1}, we obtain
$$\sum_{n=0}^{\infty}t(3n+1)q^n\equiv2\ell_2\psi(q^3)\pmod{4},$$ which is the $\alpha=0$ case of \eqref{6}. Now, proceeding in the same way as in (ii) of Theorem~\ref{thm1}, we arrive at \eqref{6} and \eqref{7}.

(iii)~Extracting the terms involving $q^{3n+2}$ from \eqref{11}, dividing by $q^2$, replacing by $q^3$ by $q$ and then using \eqref{eq5} and \eqref{lm1}, we obtain
$$\sum_{n=0}^{\infty}t(3n+2)q^n\equiv4\ell_1\psi(q^6)\pmod{8},$$ which is the $\alpha=0$ case of \eqref{8}. Now, proceeding in the same way as in (ii) of Theorem~\ref{thm1}, we arrive at \eqref{8} and \eqref{9}.
\end{proof}

\begin{theorem}For all integer $\alpha\geq0$,

$(i)$~Let $p\geq5$ be any prime and $1\leq j\leq (p-1)$, we have
\begin{equation}\label{12}\hspace{-6cm}
 m\left(8\cdot p^{2}n + 8\cdot pj+\dfrac{p^2+2}{3}\right)\equiv 0\pmod{2}.
\end{equation}

$(ii)$~Let $p\geq5$ be any prime such that $\left(\dfrac{-1}{p}\right)=-1$ and $1\leq j\leq (p-1)$, we have
\begin{equation}\label{13}
\hspace{-4.5cm}\sum_{n=0}^{\infty} m\left(16\cdot p^{2\alpha}n+\dfrac{10\cdot p^{2\alpha}-1}{3}\right)q^n\equiv 4\ell_1\ell_4\pmod{8},
\end{equation}
\begin{equation}\label{14}
\hspace{-3cm}m\left(16\cdot p^{2\alpha+2}n+16\cdot p^{2\alpha+1}j+\dfrac{10\cdot p^{2\alpha+2}-1}{3}\right)\equiv 0\pmod{8}.
\end{equation}
\end{theorem}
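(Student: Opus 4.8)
The plan is to follow the inductive template established in Theorems~\ref{thm1} and~\ref{thm4}: first put $M(q)$ in eta-quotient form, then manufacture the $\alpha=0$ base congruence by a sequence of $2$-dissections, and finally propagate it with the Chen--Garvan dissections \eqref{eq7}--\eqref{eq8}. The preliminary step is to simplify the right-hand side of \eqref{c10}. Writing the overpartition factor as $(-q;q)_\infty/(q;q)_\infty=\ell_2/\ell_1^2$ and identifying $(q^6,q,q^5;q^6)_\infty=f(-q,-q^5)$ through \eqref{jaco}, the product telescopes to $M(q)=\ell_6^2/(\ell_1\ell_3)$, the analogue of \eqref{e14} and \eqref{11}.

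For part~(i) I work modulo $2$. By \eqref{lm1} we have $M(q)\equiv \ell_3^3/\ell_1\pmod{2}$, and \eqref{eq12} displays this quotient as a genuine $2$-dissection. Extracting the odd-index part gives $\sum m(2n+1)q^n\equiv \ell_6^3/\ell_2\pmod{2}$; as the right-hand side is a series in $q^2$, a second extraction produces $\sum m(4n+1)q^n\equiv \ell_3^3/\ell_1\pmod{2}$, and one further application of \eqref{eq12} yields the base congruence $\sum_{n\ge0}m(8n+1)q^n\equiv \ell_1^4\pmod{2}$. Reducing $\ell_1^4\equiv\ell_4$ by \eqref{lm1} and feeding this into \eqref{eq8} (with $q$ replaced by $q^4$), I would extract the single arithmetic progression singled out by the central exponent; since only one theta-dissection is involved, the ``Furthermore'' clause of the lemma isolates that progression automatically (no quadratic-residue restriction is needed, only $p\ge5$ so that \eqref{eq8} applies), and the surviving factor is a series in $q^p$. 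Reading off the coefficients of $q^{pn+j}$, $1\le j\le p-1$, then produces the family of vanishing congruences recorded in \eqref{12}.

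Part~(ii) runs the same machine modulo $8$, the new ingredient being the base congruence $\sum_{n\ge0}m(16n+3)q^n\equiv 4\ell_1\ell_4\pmod{8}$. I would derive it from $M(q)=\ell_6^2/(\ell_1\ell_3)$ by iterated $2$-dissection, now retaining the factor $4$ with the help of \eqref{lm2} and the splitting identities \eqref{eq12}, \eqref{eq14} and \eqref{c18}. With \eqref{13} in hand for a given $\alpha$, I substitute \eqref{eq8} for both $\ell_1$ and $\ell_4$ and collect the double sum exactly as in the passage from \eqref{g2} to \eqref{g4}. The exponents match the central value precisely when $(6t+1)^2+4(6m+1)^2\equiv0\pmod{p}$; because $\left(\dfrac{-1}{p}\right)=-1$ (equivalently $\left(\dfrac{-4}{p}\right)=-1$) this congruence has only the solution $t=m=(\pm p-1)/6$, so only the central$\times$central term survives. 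Extracting $q^{pn}$ returns the $\alpha+1$ instance of \eqref{13}, while extracting $q^{pn+j}$, $1\le j\le p-1$, gives \eqref{14}.

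The routine parts are the product simplification and the dissection bookkeeping, which parallel the earlier theorems almost verbatim. The real obstacle is the modulo~$8$ base congruence in part~(ii): the successive $2$-dissections must be carried out while keeping the coefficient $4$ intact, so that the identity does not collapse to a weaker modulus, and this is exactly where \eqref{lm2} together with the three-term dissection \eqref{c18} must be combined with care. By comparison the quadratic-residue step is a direct analogue of the argument surrounding \eqref{g3}.
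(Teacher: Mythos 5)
Your derivation of the base congruence in part (i) is genuinely different from the paper's and, as far as it goes, is carried out correctly: starting from $M(q)=\ell_6^2/(\ell_1\ell_3)\equiv\ell_3^3/\ell_1\pmod 2$ and applying \eqref{eq12} twice does give $\sum_{n\ge0}m(8n+1)q^n\equiv\ell_1^4\equiv\ell_4\pmod 2$. The problem is that this does not prove \eqref{12}. Since $\ell_4$ is $\ell_1$ with $q$ replaced by $q^4$, substituting \eqref{eq8} (in the variable $q^4$) isolates the residue class $n\equiv 4(p^2-1)/24=(p^2-1)/6\pmod p$, so the progression you end up controlling is $8n+1$ with $n=pn'+(p^2-1)/6$, i.e.\ $m\left(8pn'+(4p^2-1)/3\right)$, not $m\left(8pn'+(p^2+2)/3\right)$; the constant $(p^2+2)/3$ in \eqref{12} corresponds to the class $n\equiv(p^2-1)/24\pmod p$, which is a different one. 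The paper reaches \eqref{12} by quoting \eqref{15} from \cite{GM1}; its right-hand side reduces to $\ell_4$ modulo $2$, hence is supported on $q^{4n}$, and extraction gives $\sum m(8n+1)q^n\equiv\ell_1\pmod 2$ in \eqref{m1}, from which \eqref{m2} and \eqref{12} follow. Your $\ell_1^4$ and the paper's $\ell_1$ cannot both be right, and small coefficients ($m(5)=9$ is odd, so $\sum m(2n+1)q^n$ is not supported on $q^{4n}$ modulo $2$, while $m(9)$ is even although $\ell_1$ has an odd coefficient of $q$) support your dissection and contradict \eqref{15}. So your argument, completed honestly, proves a congruence on a different arithmetic progression and does not yield \eqref{12} as stated; asserting that the stated family ``is produced'' papers over this mismatch.

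For part (ii) the inductive step you describe --- substituting \eqref{eq8} into $4\ell_1\ell_4$, reducing the exponent matching to $(6t+1)^2+4(6m+1)^2\equiv0\pmod p$, and using $\left(\dfrac{-1}{p}\right)=-1$ to kill all but the central term --- is exactly the paper's argument and is fine. But the whole weight of part (ii) rests on the base congruence $\sum m(16n+3)q^n\equiv4\ell_1\ell_4\pmod 8$, and you only assert that it can be obtained ``by iterated $2$-dissection'' using \eqref{lm2}, \eqref{eq12}, \eqref{eq14} and \eqref{c18}. That toolkit cannot do the job: \eqref{eq14} and \eqref{c18} are $3$-dissections (they separate residues modulo $3$, with components in $\ell_3,\ell_6,\ell_9,\ell_{18}$) and are of no use for splitting $M(q)$ into classes modulo $16$, while \eqref{eq12} by itself only controls matters modulo $2$, not modulo $8$. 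The paper gets the base case by citing the exact $2$-dissection \eqref{16}, namely $\sum m(8n+3)q^n\equiv4\ell_2\ell_4^2\ell_6^4/\ell_{12}^2\pmod 8$, from \cite{GM1} and then performing one easy extraction. Without either that citation or a genuine mod-$8$ dissection of $\ell_6^2/(\ell_1\ell_3)$ (which requires $2$-dissections of $1/(\ell_1\ell_3)$, not the identities you list), your proof of part (ii) has a hole precisely at the step you yourself identify as ``the real obstacle''.
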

\begin{proof}
(i)~From \cite[p.20 Theorem 15]{GM1}, we note that
\begin{equation}\label{15}
\sum_{n=0}^{\infty} m(2n+1)q^n\equiv\dfrac{\ell_4^2\ell_6^4}{\ell_2^2\ell_{12}^2}\pmod{2}.
\end{equation}Using \eqref{lm1} and then extracting the terms involving $q^{4n}$, replacing $q^4$ by $q$, we obtain
\begin{equation}\label{m1}
\sum_{n=0}^{\infty} m(8n+1)q^n\equiv\ell_1\pmod{2}.
\end{equation}
Substituting \eqref{eq8} in \eqref{m1} and then extracting the terms involving $q^{pn+(p^2-1)/24}$, dividing by $q^{(p^2-1)/24}$ and then replacing $q^p$ by $q$, we obtain
\begin{equation}\label{m2}
\sum_{n=0}^{\infty}m\left(8\cdot pn+\dfrac{p^2+2}{3}\right)q^n\equiv (-1)^{(\pm p-1)/6}\ell_p\pmod{2}.
\end{equation}Hence, the result easily follows from \eqref{m2} by extracting the terms involving $q^{pn+j}$, $1\leq j\leq(p-1)$.

(ii)~From \cite[p.21 Theorem 15]{GM1}, we note that
\begin{equation}\label{16}
\sum_{n=0}^{\infty} m(8n+3)q^n\equiv4\dfrac{\ell_2\ell_4^2\ell_6^4}{\ell_{12}^2}\pmod{8}.
\end{equation}Extracting the terms involving $q^{2n}$ from \eqref{16}, replacing $q^2$ by $q$ and then using \eqref{lm1}, we obtain
\begin{equation*}
\sum_{n=0}^{\infty} m(16 n+3)q^n\equiv4\ell_1\ell_4\pmod{8},
\end{equation*}
which is the $\alpha=0$ case of \eqref{13}. Now, proceeding in the same way as in (ii) of Theorem~\ref{thm1}, we arrive at \eqref{13} and \eqref{14}.
\end{proof}

\begin{theorem}For all integer $\alpha\geq0$,

$(i)$~Let $p\geq5$ be any prime such that $\left(\dfrac{-6}{p}\right)=-1$ and $1\leq j\leq (p-1)$, we have
\begin{equation}\label{17}
\hspace{-4cm}\sum_{n=0}^{\infty} r\left(8\cdot p^{2\alpha}n+\dfrac{7\cdot p^{2\alpha}-1}{3}\right)q^n\equiv 2\ell_1\psi(q^2)\pmod{4},
\end{equation}
\begin{equation}\label{18}
\hspace{-3.5cm}r\left(8\cdot p^{2\alpha+2}n+8\cdot p^{2\alpha+1}j+\dfrac{7\cdot p^{2\alpha+2}-1}{3}\right)\equiv 0\pmod{4}.
\end{equation}

$(ii)$~Let $p\geq5$ be any prime such that $\left(\dfrac{-1}{p}\right)=-1$ and $1\leq j\leq (p-1)$, we have
\begin{equation}\label{19}
\hspace{-4.5cm}\sum_{n=0}^{\infty} r\left(16\cdot p^{2\alpha}n+\dfrac{10\cdot p^{2\alpha}-1}{3}\right)q^n\equiv 2\ell_1\ell_4\pmod{4},
\end{equation}
\begin{equation}\label{20}
\hspace{-3cm}r\left(16\cdot p^{2\alpha+2}n+16\cdot p^{2\alpha+1}j+\dfrac{10\cdot p^{2\alpha+2}-1}{3}\right)\equiv 0\pmod{4}.
\end{equation}

$(iii)$~Let $p\geq5$ be any prime such that $\left(\dfrac{-2}{p}\right)=-1$ and $1\leq j\leq (p-1)$, we have
\begin{equation}\label{21}
\hspace{-3.8cm}\sum_{n=0}^{\infty} r\left(16\cdot p^{2\alpha}n+\dfrac{22\cdot p^{2\alpha}-1}{3}\right)q^n\equiv 2\ell_2\psi(q^3)\pmod{4},
\end{equation}
\begin{equation}\label{22}
\hspace{-3cm}r\left(16\cdot p^{2\alpha+2}n+16\cdot p^{2\alpha+1}j+\dfrac{22\cdot p^{2\alpha+2}-1}{3}\right)\equiv 0\pmod{4}.
\end{equation}
\end{theorem}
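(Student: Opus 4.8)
\emph{Proof proposal.} The plan is to reduce the product on the right of \eqref{c12} to a single eta-quotient and then run the inductive scheme of Theorem~\ref{thm1} three times, once for each progression. Using $(-q^2;q^2)_\infty=\ell_4/\ell_2$ together with the elementary product evaluations $(-q;q^2)_\infty=\ell_2^2/(\ell_1\ell_4)$ and $(-q^3;q^6)_\infty=\ell_6^2/(\ell_3\ell_{12})$, which with the triple product \eqref{jaco} evaluate the theta factor of \eqref{c12}, I would first collapse the right side of \eqref{c12} to the compact form
\begin{equation*}
\sum_{n=0}^{\infty}r(n)q^n=\frac{\ell_3\ell_{12}}{\ell_1\ell_6}.
\end{equation*}
All later steps are carried out on this quotient.

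Second, I would establish the three base cases, i.e.\ the $\alpha=0$ instances of \eqref{17}, \eqref{19} and \eqref{21}, namely $\sum r(8n+2)q^n\equiv 2\ell_1\psi(q^2)$, $\sum r(16n+3)q^n\equiv 2\ell_1\ell_4$ and $\sum r(16n+7)q^n\equiv 2\ell_2\psi(q^3)$, all modulo $4$. These should come from successively $2$-dissecting $\ell_3\ell_{12}/(\ell_1\ell_6)$ into the residue classes $2\pmod 8$, $3\pmod{16}$ and $7\pmod{16}$, and then simplifying with \eqref{lm1}, \eqref{lm2} and \eqref{eq5}. I expect this to be the main obstacle: a bare modulo-$2$ reduction is useless here, since \eqref{lm1} collapses the quotient to $\ell_3^3/\ell_1\pmod 2$, whereas the three targets all carry an overall factor $2$ (so $r(\cdot)$ vanishes modulo $2$ on these progressions), forcing every dissection to be tracked modulo $4$ via the standard $2$-dissections of $1/\ell_1$ and of $\psi$.

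Third, with the base cases in hand the induction on $\alpha$ is a transcription of Theorem~\ref{thm1}$(ii)$. For part $(i)$ I would substitute \eqref{eq8} for $\ell_1$ and \eqref{eq7} (with $q$ replaced by $q^2$) for $\psi(q^2)$ into $2\ell_1\psi(q^2)$, producing a double sum whose generic exponent is $\tfrac{3t^2+t}{2}+(m^2+m)$. Multiplying through by $24$ recasts the congruence that selects the diagonal residue class modulo $p$ as
\begin{equation*}
(6t+1)^2+6(2m+1)^2\equiv 0\pmod p,
\end{equation*}
and the hypothesis $\left(\dfrac{-6}{p}\right)=-1$ then forces $t$ and $m$ to their distinguished values, annihilating every off-diagonal term. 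Because the surviving term is preceded by the factor $2$ and all of its signs have the shape $2(-1)^{\ast}\equiv 2\pmod 4$, the sign bookkeeping is irrelevant; after isolating the diagonal residue, rescaling $q^p\mapsto q$, then extracting $q^{pn}$ and rescaling once more, one recovers the $\alpha+1$ case. Parts $(ii)$ and $(iii)$ run identically with the forms $(6t+1)^2+4(6m+1)^2$ and $2(6t+1)^2+9(2m+1)^2$, whose anisotropy is governed precisely by $\left(\dfrac{-1}{p}\right)=-1$ and $\left(\dfrac{-2}{p}\right)=-1$.

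Finally, the vanishing congruences \eqref{18}, \eqref{20} and \eqref{22} would follow, exactly as in Theorem~\ref{thm1}, by extracting from the intermediate identities the residues $q^{pn+j}$ with $1\le j\le p-1$, which are absent from the surviving single-term series. The only genuinely new labor is the modulo-$4$ base-case dissection; everything else is dictated by the templates already used for $g_2$, $h$, $t$ and $m$.
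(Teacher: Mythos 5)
Your inductive superstructure is exactly the paper's, and the pieces you do supply are correct: the compact form $\sum_{n\ge0} r(n)q^n=\ell_3\ell_{12}/(\ell_1\ell_6)$ is right, and all three of your quadratic forms check out. Substituting \eqref{eq8} and \eqref{eq7} into $2\ell_1\psi(q^2)$, $2\ell_1\ell_4$ and $2\ell_2\psi(q^3)$ and clearing denominators by $24$ does yield $(6t+1)^2+6(2m+1)^2$, $(6t+1)^2+4(6m+1)^2$ and $2(6t+1)^2+9(2m+1)^2$, whose only solution modulo $p$ is the diagonal one precisely under $\left(\dfrac{-6}{p}\right)=-1$, $\left(\dfrac{-1}{p}\right)=-1$ and $\left(\dfrac{-2}{p}\right)=-1$; the shifts $7(p^2-1)/24$, $5(p^2-1)/24$ and $11(p^2-1)/24$ also reproduce the stated progressions, and extracting $q^{pn+j}$ gives \eqref{18}, \eqref{20}, \eqref{22}. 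The gap is that the three base cases --- which you yourself flag as the main obstacle --- are never actually established. ``Successively $2$-dissecting $\ell_3\ell_{12}/(\ell_1\ell_6)$ into the classes $2\pmod 8$, $3\pmod{16}$ and $7\pmod{16}$ while tracking everything modulo $4$'' is a plan, not a proof: it is three or four nested dissections deep and is the only part of the theorem that is not routine.

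The paper does not perform those dissections from scratch either; it imports from Gupta and Rana \cite{GM1} the intermediate identities
\begin{equation*}
\sum_{n=0}^{\infty} r(4n+2)q^n=2\dfrac{\ell_2\ell_6^2\ell_8^2}{\ell_1^4\ell_{12}},\qquad
\sum_{n=0}^{\infty} r(8n+3)q^n\equiv 2\dfrac{\ell_4\ell_8\ell_{12}^2}{\ell_2\ell_{24}}\pmod 8,\qquad
\sum_{n=0}^{\infty} r(8n+7)q^n\equiv 2\dfrac{\ell_4^4\ell_6\ell_{24}}{\ell_2^2\ell_8\ell_{12}}\pmod 8,
\end{equation*}
after which each base case is a single even-part extraction together with the binomial congruence \eqref{lm1}: because of the prefactor $2$, the remaining eta-quotient only has to be controlled modulo $2$, so no genuinely new modulo-$4$ dissection is needed at that stage. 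To close your argument, either cite these three formulas or actually carry out the dissections you describe; as written, the proposal proves the induction step and the $j\ne 0$ vanishing, but not the statements it is inducting from.
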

\begin{proof}
(i)~From \cite[p.21 Theorem 16]{GM1}, we note that
\begin{equation}\label{23}
\sum_{n=0}^{\infty} r(4n+2)q^n=2\dfrac{\ell_2\ell_6^2\ell_8^2}{\ell_1^4\ell_{12}}.
\end{equation}Using \eqref{lm1} in \eqref{23} and then extracting the terms involving $q^{2n}$, replacing $q^2$ by $q$, we obtain
$$\sum_{n=0}^{\infty} r(8n+2)q^n\equiv2\ell_1\psi(q^2)\pmod{4},$$which is the $\alpha=0$ case of \eqref{17}. Now, proceeding in the same way as in (ii) of Theorem~\ref{thm1}, we arrive at \eqref{17} and \eqref{18}.

(ii)~From \cite[p.22 Theorem 16]{GM1}, we note that
\begin{equation}\label{24}
\sum_{n=0}^{\infty} r(8n+3)q^n\equiv 2\dfrac{\ell_4\ell_8\ell_{12}^2}{\ell_2\ell_{24}}\pmod{8}.
\end{equation}Using \eqref{lm1} in \eqref{24} and then extracting the terms involving $q^{2n}$, replacing $q^2$ by $q$, we obtain
$$\sum_{n=0}^{\infty} r(16n+3)q^n\equiv 2\ell_1\ell_4\pmod{4},$$which is the $\alpha=0$ case of \eqref{19}. Now, proceeding in the same way as in (ii) of Theorem~\ref{thm1}, we arrive at \eqref{19} and \eqref{20}.

(iii)~From \cite[p.22 Theorem 16]{GM1}, we note that
\begin{equation}\label{25}
\sum_{n=0}^{\infty} r(8n+7)q^n\equiv 2\dfrac{\ell_4^4\ell_6\ell_{24}}{\ell_2^2\ell_8\ell_{12}}\pmod{8}.
\end{equation}Using \eqref{eq5} and \eqref{lm1} in \eqref{25} and then extracting the terms involving $q^{2n}$, replacing $q^2$ by $q$, we obtain
$$\sum_{n=0}^{\infty} r(16n+7)q^n\equiv 2\ell_2\psi(q^3)\pmod{4},$$which is the $\alpha=0$ case of \eqref{21}. Now, proceeding in the same way as in (ii) of Theorem~\ref{thm1}, we arrive at \eqref{21} and \eqref{22}.
\end{proof}

\begin{theorem}We have
\begin{align}
\label{26}
& s(24n+i)\equiv0\pmod{4},\quad where \quad i=9,15,21,\\
\label{27}
& s(24n+23)\equiv 0\pmod{8},\\
\label{28}
& s(24n+17)\equiv 0\pmod{8},\\
\label{29}
& s(12n+1)\equiv 0\pmod{16}.
\end{align}
\end{theorem}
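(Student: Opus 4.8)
The plan is to first put the series for $s(n)$ into a ``theta quotient'' form and then read off every congruence from a $2$-adic expansion. Applying the Jacobi triple product \eqref{jaco} to $(q^6,-q^3,-q^3;q^6)_\infty$ identifies it with $f(q^3,q^3)=\phi(q^3)$, while $\dfrac{(-q^2;q^2)_\infty}{(q^2;q^2)_\infty}=\dfrac{\ell_4}{\ell_2^2}=\dfrac{1}{\phi(-q^2)}$ by \eqref{eq6}. Hence \eqref{c14} becomes
\begin{equation*}
\sum_{n=0}^{\infty}s(n)q^n=\dfrac{\ell_4\,\ell_6^5}{\ell_2^2\,\ell_3^2\,\ell_{12}^2}=\dfrac{\phi(q^3)}{\phi(-q^2)}.
\end{equation*}
Writing $\phi(-q^2)=1+2B$ with $B:=\sum_{m\ge1}(-1)^mq^{2m^2}$ and $\phi(q^3)=1+2C$ with $C:=\sum_{k\ge1}q^{3k^2}$ (both exact theta series by \eqref{eq4}), I would expand $\dfrac{1}{\phi(-q^2)}=\sum_{j\ge0}(-2B)^j$ as a formal power series, so that modulo $2^N$ only the terms $j=0,\dots,N-1$ survive. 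The key observation driving every case is that $B$ contributes only even exponents $2m^2$, while $C$ contributes an odd exponent $3k^2$ exactly when $k$ is odd; thus for an odd target $N$ every surviving term must carry precisely one factor of $C$, and each congruence reduces to a representation count for a quadratic form of shape $2(m_1^2+\cdots)+3k^2$ modulo a small power of $2$.

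For \eqref{26} I would work modulo $4$, where $S\equiv 1-2B+2C$. For $N=24n+i$ with $i=9,15,21$ the only potential contribution, $2C$, needs $N=3k^2$; but $N/3=8n+3,\,8n+5,\,8n+7$ are quadratic non-residues mod $8$, so no representation exists and $s(N)\equiv0\pmod4$. For \eqref{27} and \eqref{28} I would pass to modulo $8$, where the extra term $-4BC$ appears, demanding $N=2m^2+3k^2$ with $k$ odd; a residue check shows $N\equiv23\pmod{24}$ forces $m^2\equiv2\pmod4$ and $N\equiv17\pmod{24}$ forces $m^2\equiv3\pmod4$, both impossible, while $N\not\equiv0\pmod3$ kills the $2C$ term. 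Hence $s(24n+23)\equiv s(24n+17)\equiv0\pmod8$.

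The congruence \eqref{29} is the main obstacle, since modulo $16$ one must also retain $8B^2C$, corresponding to representations $N=2m^2+2m'^2+3k^2$ with $k$ odd. Here there is no pointwise quadratic-form obstruction---such representations genuinely occur modulo $3$---so I would instead argue by the involution $(m,m')\mapsto(m',m)$: off-diagonal solutions cancel in pairs, leaving the parity of the count governed entirely by the diagonal $m=m'$, namely $4m^2+3k^2=N$. For $N=12n+1\equiv1\pmod4$ and $k$ odd one has $3k^2\equiv3\pmod4$, so the diagonal is empty; therefore the signed coefficient of $q^N$ in $8B^2C$ is even and contributes $0$ modulo $16$. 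Since $N\equiv1\pmod3$ also annihilates the $2C$ and $-4BC$ terms, we obtain $s(12n+1)\equiv0\pmod{16}$. The only delicate point throughout is this last involution argument, where vanishing comes from a parity cancellation rather than from ruling out solutions directly; the remaining cases are routine modular bookkeeping once the expansion of $\phi(q^3)/\phi(-q^2)$ is set up.
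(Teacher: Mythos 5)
Your proof is correct, but it takes a genuinely different route from the paper's. You recast $S(q)$ as the theta quotient $\phi(q^3)/\phi(-q^2)$ via the Jacobi triple product and expand $1/\phi(-q^2)=\sum_{j\ge0}(-2B)^j$ $2$-adically, so each congruence becomes a representation question for the forms $3k^2$, $2m^2+3k^2$ and $2m_1^2+2m_2^2+3k^2$ with $k$ odd; I checked that your starting identity is consistent with the paper (its $2$-dissection $\sum s(2n+1)q^n=2q\ell_2\ell_{24}^2/(\ell_1^2\ell_{12})$ is exactly the odd part of your quotient), that the residue obstructions modulo $8$, $4$ and $3$ you invoke for \eqref{26}--\eqref{28} and for the $2C$, $-4BC$ terms of \eqref{29} all hold, and that your swap involution correctly reduces the parity of the signed count in $8B^2C$ to the empty diagonal $4m^2+3k^2=12n+1$ (impossible since $3k^2\equiv3\pmod4$ for $k$ odd). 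The paper instead starts from the Gupta--Rana dissection \eqref{30}, applies the $3$-dissection \eqref{c18} of $\ell_2/\ell_1^2$ and the $2$-dissections \eqref{eq12}, \eqref{eq14} together with the binomial congruences \eqref{lm1}--\eqref{lm2}, and reads the four congruences off successive coefficient extractions. The paper's route has the advantage of producing intermediate generating-function congruences (\eqref{33}, \eqref{35}, \eqref{s1}, \eqref{47}) that are reused immediately to build the infinite families in the final theorem; your route is more self-contained and more explanatory, exhibiting each congruence as a concrete quadratic-form obstruction, at the cost of not yielding those reusable dissection formulas. In a full write-up the one step to spell out is the involution: state explicitly that the sign $(-1)^{m_1+m_2}$ is preserved under swapping $m_1$ and $m_2$, so off-diagonal solutions contribute in pairs and only the diagonal governs the parity.
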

\begin{proof}
~From \cite[p.12 Theorem 7]{GM1}, we note that
\begin{equation}\label{30}
\sum_{n=0}^{\infty}s(2n+1)q^n=2q\dfrac{\ell_2\ell_{24}^2}{\ell_1^2\ell_{12}}.
\end{equation}Employing \eqref{c18} in \eqref{30}, we obtain
\begin{equation}\label{31}
\sum_{n=0}^{\infty}s(2n+1)q^n=2q\dfrac{\ell_6^4\ell_9^6\ell_{24}^2}{\ell_3^8\ell_{12}\ell_{18}^3}+4q^2\dfrac{\ell_6^3\ell_9^3\ell_{24}^2}{\ell_3^7\ell_{12}}+8q^3\dfrac{\ell_6^2\ell_{18}^3\ell_{24}^2}{\ell_3^6\ell_{12}}.
\end{equation}Extracting the terms involving $q^{3n}$ from \eqref{31}, replacing $q^3$ by $q$ and then using \eqref{lm1}, we obtain
\begin{equation}\label{33}
\sum_{n=0}^{\infty}s(6n+1)q^n\equiv 8q\dfrac{\ell_6^3\ell_8^2}{\ell_2\ell_4}\pmod{16}.
\end{equation}Hence, the result \eqref{29} easily follows from \eqref{33} by extracting the terms involving $q^{2n}$. Now, extracting the terms involving $q^{3n+2}$ from \eqref{31}, dividing by $q^2$, replacing $q^3$ by $q$, using \eqref{lm1} and then employing \eqref{eq12} and \eqref{eq14}, we obtain 
\begin{equation}\label{34}
\sum_{n=0}^{\infty}s(6n+5)q^n\equiv 4\dfrac{\ell_4^2\ell_6^2\ell_8^2}{\ell_2^2\ell_{12}}+4q\dfrac{\ell_8^2\ell_{12}^3}{\ell_4^2}\pmod{8}.
\end{equation}Extracting the terms involving $q^{2n}$ from \eqref{34}, replacing $q^2$ by $q$ and then using \eqref{lm1}, we obtain
\begin{equation}\label{35}
\sum_{n=0}^{\infty}s(12n+5)q^n\equiv4\ell_2\ell_8\pmod{8}.
\end{equation}Hence, the result \eqref{28} easily follows from \eqref{35} by extracting the terms involving $q^{2n+1}$. Now, extracting the terms involving $q^{2n+1}$ from \eqref{34}, dividing by $q$, replacing $q^2$ by $q$, we obtain
\begin{equation}\label{s1}
\sum_{n=0}^{\infty}s(12n+11)q^n\equiv 4\dfrac{\ell_4^2\ell_6^3}{\ell_2^2}\pmod{8}.
\end{equation}Hence, the result \eqref{27} easily follows from \eqref{s1} by extracting the terms involving $q^{2n+1}$.

Now, extracting the terms involving $q^{3n+1}$ from \eqref{31}, dividing by $q$, replacing $q^3$ by $q$ and then using \eqref{eq5} and \eqref{lm1}, we obtain 
\begin{equation}\label{36}
\sum_{n=0}^{\infty}s(6n+3)q^n\equiv 2\psi(q^4)\pmod{4}.
\end{equation}Hence, the result \eqref{26} easily follows from \eqref{36} by extracting the terms involving $q^{4n+i}$, $i=1,2,3$.
\end{proof}

\begin{theorem}For all integer $\alpha\geq0$,

$(i)$~Let $p\geq5$ be any prime such that $\left(\dfrac{-1}{p}\right)=-1$ and $1\leq j\leq (p-1)$, we have
\begin{equation}\label{39}
\hspace{-4.5cm}\sum_{n=0}^{\infty} s\left(24\cdot p^{2\alpha}n+3\cdot p^{2\alpha}\right)q^n\equiv 4\ell_1\ell_4\pmod{8},
\end{equation}
\begin{equation}\label{40}
\hspace{-3cm}s\left(24\cdot p^{2\alpha+2}n+24\cdot p^{2\alpha+1}j+3\cdot p^{2\alpha+2}\right)\equiv 0\pmod{8}.
\end{equation}

$(ii)$~Let $p\geq5$ be any prime such that $\left(\dfrac{-2}{p}\right)=-1$ and $1\leq j\leq (p-1)$, we have
\begin{equation}\label{41}
\hspace{-3.8cm}\sum_{n=0}^{\infty} s\left(24\cdot p^{2\alpha}n+11\cdot p^{2\alpha}\right)q^n\equiv 4\ell_2\psi(q^3)\pmod{8},
\end{equation}
\begin{equation}\label{42}
\hspace{-3cm}s\left(24\cdot p^{2\alpha+2}n+24\cdot p^{2\alpha+1}j+11\cdot p^{2\alpha+2}\right)\equiv 0\pmod{8}.
\end{equation}

$(iii)$~Let $p\geq5$ be any prime such that $\left(\dfrac{-6}{p}\right)=-1$ and $1\leq j\leq (p-1)$, we have
\begin{equation}\label{43}
\hspace{-4cm}\sum_{n=0}^{\infty} s\left(24\cdot p^{2\alpha}n+7\cdot p^{2\alpha}\right)q^n\equiv 8\ell_1\psi(q^2)\pmod{16},
\end{equation}
\begin{equation}\label{44}
\hspace{-3.5cm}s\left(24\cdot p^{2\alpha+2}n+24\cdot p^{2\alpha+1}j+7\cdot p^{2\alpha+2}\right)\equiv 0\pmod{16}.
\end{equation}

$(iv)$~Let $p\geq5$ be any prime such that $\left(\dfrac{-18}{p}\right)=-1$ and $1\leq j\leq (p-1)$, we have
\begin{equation}\label{45}
\hspace{-3.5cm}\sum_{n=0}^{\infty} s\left(24\cdot p^{2\alpha}n+19\cdot p^{2\alpha}\right)q^n\equiv 8\ell_1\psi(q^6)\pmod{16},
\end{equation}
\begin{equation}\label{46}
\hspace{-2.6cm}s\left(24\cdot p^{2\alpha+2}n+24\cdot p^{2\alpha+1}j+19\cdot p^{2\alpha+2}\right)\equiv 0\pmod{16}.
\end{equation}
\end{theorem}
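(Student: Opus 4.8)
My plan is to handle all four families by the two-stage scheme already exploited in Theorem~\ref{thm1}(ii): first pin down the $\alpha=0$ base congruence, then lift it to every $\alpha$ by an induction driven by the $p$-dissections of $\psi$ and $\ell_1$ in \eqref{eq7} and \eqref{eq8}. The base congruences should all come from the intermediate $s$-congruences \eqref{33}, \eqref{35}, \eqref{s1} proved just above, by extracting a suitable arithmetic subprogression, sending $q^2\mapsto q$, and reducing the resulting eta-quotient modulo the relevant power of $2$ with \eqref{lm1}. Since each target right-hand side already carries an explicit factor of $4$ or $8$, I only ever need the cofactor modulo $2$ (for the modulus-$8$ parts) or modulo $2$ of an $8$-multiple (for the modulus-$16$ parts), which is exactly what makes the eta-quotients collapse into the clean theta products.

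For (i) I would start from \eqref{35}, $\sum s(12n+5)q^n\equiv 4\ell_2\ell_8\pmod 8$: the right side has only even powers of $q$, so taking the even part and replacing $q^2$ by $q$ gives $\sum s(24n+5)q^n\equiv 4\ell_1\ell_4\pmod 8$ directly. For (ii) I would begin with \eqref{s1}, $\sum s(12n+11)q^n\equiv 4\ell_4^2\ell_6^3/\ell_2^2\pmod 8$; again only even powers occur, and after $q^2\mapsto q$ a reduction modulo $2$ through \eqref{lm1} together with \eqref{eq5} turns $\ell_2^2\ell_3^3/\ell_1^2$ into $\ell_2\psi(q^3)$, yielding the base congruence $\sum s(24n+11)q^n\equiv 4\ell_2\psi(q^3)\pmod 8$.

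Parts (iii) and (iv) share the source \eqref{33}, $\sum s(6n+1)q^n\equiv 8q\,\ell_6^3\ell_8^2/(\ell_2\ell_4)\pmod{16}$. Because the quotient is a series in $q^2$, the whole right side is supported on odd exponents, so only the odd-indexed terms survive; extracting them and sending $q^2\mapsto q$ reduces this to $\sum s(12n+7)q^n\equiv 8\,\ell_3^3\ell_4^2/(\ell_1\ell_2)\pmod{16}$. I would then split this according to the parity of $n$ (which separates $s(24n+7)$ from $s(24n+19)$) by $2$-dissecting $\ell_3^3/\ell_1$ via \eqref{eq12}, exactly as \eqref{34} was obtained earlier; modulo $2$ the even part simplifies to $\ell_1\psi(q^2)$ and the odd part to $\ell_1\psi(q^6)$, giving the two base congruences $\sum s(24n+7)q^n\equiv 8\ell_1\psi(q^2)\pmod{16}$ and $\sum s(24n+19)q^n\equiv 8\ell_1\psi(q^6)\pmod{16}$.

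The inductive step is then verbatim Theorem~\ref{thm1}(ii): substitute \eqref{eq8} for each $\ell_k$-factor (after $q\mapsto q^k$) and \eqref{eq7} for each $\psi(q^k)$-factor on the right, multiply the two dissections, and extract the single progression modulo $p$ matching the diagonal exponents; dividing by the appropriate power of $q$ and replacing $q^p$ by $q$ produces the odd-$p$-power level, and a second extraction of the $q^{pn}$-terms raises $\alpha$ to $\alpha+1$, while the $q^{pn+j}$-terms ($1\le j\le p-1$) of the intermediate level give the companion vanishing congruences. The decisive point, and the main obstacle, is the cross-term analysis: clearing denominators reduces the relevant exponent congruence to $(6t+1)^2+4(6m+1)^2$, $2(6t+1)^2+9(2s+1)^2$, $(6t+1)^2+6(2s+1)^2$, and $(6t+1)^2+18(2s+1)^2\equiv 0\pmod p$ for (i)--(iv) respectively (cf. \eqref{g3}), and one must check case by case that the hypotheses $\big(\tfrac{-1}{p}\big),\big(\tfrac{-2}{p}\big),\big(\tfrac{-6}{p}\big),\big(\tfrac{-18}{p}\big)=-1$ are precisely what force both squares to vanish, so that only the diagonal term $t=s=(\pm p-1)/6$ survives. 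The other genuinely new labor is verifying that the $2$- and $4$-dissections in the base cases really close up into the stated theta products rather than merely agreeing to low order; everything beyond these two checks is routine bookkeeping of exponents.
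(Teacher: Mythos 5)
Your proposal follows the paper's proof essentially verbatim: the same base cases are extracted from \eqref{33}, \eqref{35} and \eqref{s1} (including the $2$-dissection of $\ell_3^3/\ell_1$ via \eqref{eq12} to separate the $24n+7$ and $24n+19$ classes), followed by the same induction on $\alpha$ using Lemmas 2.1 and 2.2 as in Theorem~\ref{thm1}(ii). Your explicit cross-term congruences $(6t+1)^2+4(6m+1)^2$, $2(6t+1)^2+9(2s+1)^2$, $(6t+1)^2+6(2s+1)^2$ and $(6t+1)^2+18(2s+1)^2 \equiv 0 \pmod{p}$ are correct and match the stated Legendre-symbol hypotheses; the paper leaves exactly this verification implicit by deferring to the earlier argument.
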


\begin{proof}
(i)~Extracting the terms involving $q^{2n}$ from \eqref{35} and then replacing $q^2$ by $q$, we obtain
$$\sum_{n=0}^{\infty}s(24n+5)q^n\equiv 4\ell_1\ell_4\pmod{8},$$ which is the $\alpha=0$ case of \eqref{39}. Now, proceeding in the same way as in (ii) of Theorem~\ref{thm1}, we arrive at \eqref{39} and \eqref{40}.

(ii)~Extracting the terms involving $q^{2n}$ from \eqref{s1}, replacing $q^2$ by $q$ and then using \eqref{eq5} and \eqref{lm1}, we obtain
$$\sum_{n=0}^{\infty}s(24n+11)q^n\equiv 4\ell_2\psi(q^3)\pmod{8},$$ which is the $\alpha=0$ case of \eqref{41}. Now, proceeding in the same way as in (ii) of Theorem~\ref{thm1}, we arrive at \eqref{41} and \eqref{42}.

(iii)~Extracting the terms involving $q^{2n+1}$ from \eqref{33}, dividing by $q$, replacing $q^2$ by $q$ and then employing \eqref{eq12}, we obtain 
\begin{equation}\label{47}
\sum_{n=0}^{\infty}s(12n+7)q^n\equiv 8\dfrac{\ell_4^5\ell_6^2}{\ell_2^3\ell_{12}}+8q\dfrac{\ell_4\ell_{12}^3}{\ell_2}\pmod{16}.
\end{equation}Extracting the terms involving $q^{2n}$ from \eqref{47}, replacing $q^2$ by $q$ and then using \eqref{eq5} and \eqref{lm1}, we obtain
$$\sum_{n=0}^{\infty}s(24n+7)q^n\equiv 8\ell_1\psi(q^2)\pmod{16},$$ which is the $\alpha=0$ case of \eqref{43}. Now, proceeding in the same way as in (ii) of Theorem~\ref{thm1}, we arrive at \eqref{43} and \eqref{44}.

(iv)~Extracting the terms involving $q^{2n+1}$ from \eqref{47}, dividing by $q$, replacing $q^2$ by $q$ and then using \eqref{eq5} and \eqref{lm1}, we obtain 
$$\sum_{n=0}^{\infty}s(24n+19)q^n\equiv 8\ell_1\psi(q^6)\pmod{16},$$ which is the $\alpha=0$ case of \eqref{45}. Now, proceeding in the same way as in (ii) of Theorem~\ref{thm1}, we arrive at \eqref{45} and \eqref{46}.
\end{proof}

\section*{Acknowledgements} The first author acknowledge the financial support received from UGC, India through National Fellowship for Scheduled Caste  Students (NFSC) under grant Ref. no.: 211610029643.
	
	\section*{\bf Declarations}
	\noindent{\bf Conflict of Interest.} The authors declare that there is no conflict of interest regarding the publication of
	this article.
	
	\noindent{\bf Human and animal rights.} The authors declare that there is no research involving human participants or
	animals in the contained of this paper.	
	
	\noindent{\bf Data availability statements.} Data sharing not applicable to this article as no datasets were generated or analysed during the current study.


\begin{thebibliography}{100}
	\bibitem{AP} Afsharijoo, P.: Even-odd partition identities of Rogers-Ramanujan type. \textit{Ramanujan J.} \textbf{57}, 969-979 (2022).
	
	\bibitem{AKK} Agarwal, A. K.: Rogers-Ramanujan identities for $n$-color partitions. \textit{J. Number Theory.} \textbf{28}, 299-305 (1988).
	
	\bibitem{AB} Ahmed, Z., Baruah, N. D.: New congruences for $\ell$-regular partitions for $\ell\in\{5,6,7,49\}$. \textit{Ramanujan J.} \textbf{40}, 649-668 (2016).
	
	\bibitem{lostp2} Andrews, G. E., Berndt, B. C.: \textit{Ramanujan's Lost Notebook Part II.} Springer. New York (2009).
	
	\bibitem{BBC} Berndt, B. C.:  \textit{Ramanujan's Notebooks, Part III.} Springer-Verlag. New York (1991).
	
	\bibitem{BR} Berndt, B. C., Rankin, R. A.: \textit{Ramanujan: letters and commentary.} American Mathematical Society. Providence. RI. London Mathematical Society. London (1995).
	
	\bibitem{CW} Chu, W., Zhang, W.: Bilateral bailey lemma and Rogers-Ramanujan identities. \textit{Adv. Appli. Math.}  \textbf{42}, 359-391 (2009).
	
	\bibitem{CG} Cui, S. P., Gu, N. S. S.: Arithmetic properties of $l$-regular paritions. \textit{Adv. Appl. Math.} \textbf{51}, 507-523 (2013).
	
	\bibitem{LE} Euler, L.: \textit{Introductio in Analysin Infinitorum}. Marcum-Michaelem Bousquet. Lausannae (1748).
	
	\bibitem{RG1} Gupta, V., Rana, M.: On some combinatorics of Rogers-Ramanujan type identities using signed color partitions. In \textit{Current trends in mathematical analysis and its interdisciplinary applications}. Springer. Berlin. Germany.  99-114 (2019).
	
	\bibitem{GM1} Gupta, V., Rana, M.:  Some congruences for the coefficients of Rogers-Ramanujan type identities. \textit{Mathematics.} \textbf{10}, 35-82 (2022).
	
	\bibitem{GRS} Gupta, V., Rana, M. and Sharma, S.: On weighted signed color partitions. \textit{Indian Acad. Sci. (Math. Sci.)}. DOI: 10.1007/s12044-019-0545-1 (2020).
	
	\bibitem{HD} Hirschhorn, M. D.: \textit{The Power of $q$}. In Developments in Mathematics. Springer. Berlin. Germany. \textbf{49} (2017).
	
	\bibitem{MM} MacMahon, P. A.:\textit{ Combinatory Analysis}. American Mathematical Society. Providence. RI. USA. \textbf{137} (2001).
	
	\bibitem{RL} Rogers, L. J.: Second memoir on the expansion of certain infinite products. \textit{Proc. Lond. Math. Soc.} \textbf{1}, 318-343 (1893).
	
	\bibitem{SN} Saikia, N.: Partition-theoretic interpretations of some $q$-series identities of Ramanujan. \textit{Kuwait J. Sci.} \textbf{48}(2), 1-6 (2021).
	
	\bibitem{SL} Slater, L. J.: Further identities of the Rogers-Ramanujan type. \textit{Proc. Lond. Math. Soc.} \textbf{2}, 147-167 (1952).
	
	
	
	
	
	\end{thebibliography}
	\end{document}